\newtheorem{theorem}{Theorem}[section]
\newtheorem{corollary}[theorem]{Corollary}
\newtheorem{lemma}[theorem]{Lemma}
\newtheorem{proposition}[theorem]{Proposition}
\theoremstyle{remark}
\newtheorem{remark}{Remark}[section]
\theoremstyle{definition}
\newtheorem{define}[theorem]{Definition}
\newtheorem*{problem}{Problem}
\newtheorem*{Acknowledgement}{Acknowledgement}
\newcommand{\ve}{\varepsilon}
\newcommand{\nhd}{neighborhood}
\newcommand{\va}{\varphi}
\newcommand{\fr}{\frac}	
\newcommand{\psh}{plurisubharmonic}
\newcommand{\B}{\mathbb B}
\newcommand{\re}{\mathrm{Re}}
\newcommand{\n}{\noindent}
\begin{document}

\title[S\MakeLowercase{ome properties of $h$-extendible domains}]{\Large S\MakeLowercase{ome properties of $h$-extendible domains in} $\mathbb C^{n+1}$}
\author{Ninh Van Thu\textit{$^{1,2}$} and Nguyen Quang Dieu\textit{$^{3,2}$} } 
\address{Ninh Van Thu}
\address{\textit{$^{1}$}~Department of Mathematics,  VNU University of Science, Vietnam National University, Hanoi, 334 Nguyen Trai, Thanh Xuan, Hanoi, Vietnam}
\address{\textit{$^{2}$}~Thang Long Institute of Mathematics and Applied Sciences,
	Nghiem Xuan Yem, Hoang Mai, HaNoi, Vietnam}
\email{thunv@vnu.edu.vn}

\address{Nguyen Quang Dieu}
\address{\textit{$^{3}$}~Department of Mathematics, Hanoi National University of Education, 136 Xuan Thuy, Cau Giay, Hanoi, Vietnam}
 \address{\textit{$^{2}$}~Thang Long Institute of Mathematics and Applied Sciences,
Nghiem Xuan Yem, Hoang Mai, HaNoi, Vietnam}
\email{ngquang.dieu@hnue.edu.vn}
\subjclass[2010]{Primary 32M05; Secondary 32H02, 32H50, 32T25.}
\keywords{Automorphism groups, finite type, $h$-extendible models, squeezing function.}
  
 \maketitle  
\begin{abstract}
The purpose of this article is twofold. The first aim is to characterize $h$-extendibility of smoothly bounded pseudoconvex domains in $\mathbb C^{n+1}$ by their noncompact automorphism groups. Our second goal is to show that if the squeezing function tends to $1$ at an $h$-extendible boundary point of a smooth pseudoconvex domain in $\mathbb C^{n+1}$, then this point must be strongly pseudoconvex.  
\end{abstract}


\section{Introduction}  

Let $\Omega$ be a domain in $\mathbb C^n$ and let us denote by $\mathrm{Aut}(\Omega)$ the group of biholomorphic self-maps of $\Omega$ with the compact-open topology. It is proved by H. Cartan (see \cite{Nar71}) that if $\Omega$ is a bounded domain in $\mathbb C^n$ and the $\mathrm{Aut}(\Omega)$ is noncompact then there exist a point $x \in \Omega$, a point $p \in \partial \Omega$, and automorphisms $\varphi_j \in \mathrm{Aut}(\Omega)$ such that $\varphi_j(x) \to p$. In this circumstance, we call $p$ a {\it boundary orbit accumulation point}.
Moreover, if $\partial \Omega$ enjoys some sort of convexity at $p$ then $\varphi_j$ converges uniformly on compact sets of $\Omega$ to $p$.

It is known that the local geometry of the so-called ``boundary orbit accumulation point'' $p$ in turn gives global information about the characterization of model of the domain. We refer the reader to the recent survey \cite{IK} and the references therein for the development in related subjects. For instance, B. Wong and J. P. Rosay (see \cite{W}, \cite{R}) proved the following remarkable theorem.

\n
{\bf Theorem (Wong-Rosay).} {\it Any bounded domain $\Omega \Subset\mathbb C^n$ with a $C^2$ strongly pseudoconvex boundary orbit accumulation point is biholomorphic to the unit ball in $ \mathbb C^n$.  }

After that, by using the scaling technique, introduced by S. Pinchuk \cite {P}, E. Bedford and S. Pinchuk \cite{B-P2}, F. Berteloot \cite{Ber94} proved several results about the characterization of the complex ellipsoids and models. In \cite{DN09}, Do Duc Thai and the first author showed that if $\Omega$ is  pseudoconvex finite type and smooth of class $\mathcal C^\infty$ in some neighborhood of a boundary orbit accumulation point, $\xi_0 \in \partial \Omega$, and the Levi form has corank at most one at $\xi_0$, then $\Omega$ is biholomorphically equivalent to a model
$$
M_H=\{(z_1,\cdots,z_n,w)\in \mathbb  C^n\times \mathbb C: \mathrm{Re}(w)+H(z_1,\bar z_1)+\sum_{k=1}^{n}|z_k|^2<0\},
$$
where $H$ is a homogeneous subharmonic polynomial with $\Delta H\not\equiv 0$.

To give a statement of our result, we recall that a smooth pseudoconvex boundary point $p\in \partial\Omega$ is called $h$-extendible \cite{Yu94, Yu95} (or semiregular \cite{DH94}) if Catlin's multitype and D'Angelo multitype at $p$ coincide. It is well-known that the class of  $h$-extendible points includes pseudoconvex finite points in $\mathbb C^2$, strongly pseudoconvex points in $\mathbb C^n$, and convex finite type points $\mathbb C^n$. In particular, any pseudoconvex finite type boundary point in $\mathbb C^n$ with corank of the Levi form at most one is $h$-extendible. 

The first aim in this paper is to prove the following theorem, which gives a characterization of $h$-extendible domains with noncompact automorphism groups. 
\begin{theorem}\label{maintheorem1}
Assume that $\Omega$ is a pseudoconvex domain in $\mathbb C^{n+1}$ with $\mathcal{C}^\infty$-smooth boundary $\partial \Omega$. Let $\xi_0\in\partial \Omega$ be $h$-extendible with Catlin's finite multitype $(1,m_1,\ldots,m_n)$ and let $\Lambda=(1/m_1, \ldots, 1/m_n)$. Suppose that there exists a sequence $\{\varphi_j\}\subset \mathrm{Aut}(\Omega)$ such that $\eta_j:=\varphi_j(a)$ converges $\Lambda$-nontangentially to $\xi_0$ for some $a\in \Omega$ (cf. Definition \ref{def-order}).
Then there exists a biholomorphic mapping $\sigma: \Omega \to M_P.$ Here $M_P$ is a domain of the form
$$M_{P}:=\left\{(z,w)\in \mathbb C^n\times \mathbb C\colon \mathrm{Re}(w)+P(z)<0\right \},$$
where $P$ is a $\Lambda$-homogeneous plurisubharmonic real-valued polynomial which contains no pluriharmonic monomials (cf. Definition \ref{def-28}). Moreover, the map $\sigma$ satisfies the following properties:

\n 
(a) $\sigma (a)=(0',-1)$.

\n
(b) There exist sequences $\{\xi_j\} \subset \partial \Omega$ and $\{\tilde \xi_j\} \subset \partial M_P$ such that $\xi_j \to \xi_0$ as $j\to \infty$ and that $\sigma$ extends continuously to a homeomorphism near $\xi_j$ and $\tilde \xi_j.$
\end{theorem}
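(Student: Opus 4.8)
The plan is to run Pinchuk's anisotropic scaling method adapted to the weight $\Lambda=(1/m_1,\dots,1/m_n)$. First I would normalize: since $\xi_0$ is $h$-extendible with multitype $(1,m_1,\dots,m_n)$, there are local holomorphic coordinates $(z,w)$ centered at $\xi_0$ in which $\partial\Omega$ is a graph $\mathrm{Re}(w)+P(z)+R(z,\mathrm{Im}\,w)=0$, where $P$ is the associated $\Lambda$-homogeneous \psh\ polynomial with no pluriharmonic monomials and $R$ collects terms of higher weighted order; this is the homogeneous model attached to an $h$-extendible point. Writing $\eta_j=(z^{(j)},w^{(j)})$, let $\zeta_j\in\partial\Omega$ be the boundary projection of $\eta_j$ along $\mathrm{Re}\,w$, and let $\varepsilon_j>0$ measure the resulting gap between $\eta_j$ and $\zeta_j$. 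Define the translation $T_{\zeta_j}$ sending $\zeta_j\mapsto 0$ and the anisotropic dilation $D_{\varepsilon_j}(z,w)=(\varepsilon_j^{-1/m_1}z_1,\dots,\varepsilon_j^{-1/m_n}z_n,\varepsilon_j^{-1}w)$, and put $\Lambda_j:=D_{\varepsilon_j}\circ T_{\zeta_j}$, a polynomial automorphism of $\mathbb{C}^{n+1}$, with $\Omega_j:=\Lambda_j(\Omega)$. The $\Lambda$-homogeneity of $P$ together with the higher weighted order of $R$ forces $\Omega_j\to M_P$ in the local Hausdorff (Carath\'eodory kernel) sense, while the $\Lambda$-nontangential convergence of $\eta_j$ guarantees $\Lambda_j(\eta_j)\to(0',-1)\in M_P$.

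Next I would form $\sigma_j:=\Lambda_j\circ\varphi_j:\Omega\to\Omega_j$ and extract a limit. Because $P$ is \psh, $\Lambda$-homogeneous, of finite type and free of pluriharmonic terms, $M_P$ is complete hyperbolic and taut; combined with $\Omega_j\to M_P$ and the anchoring $\sigma_j(a)=\Lambda_j(\eta_j)\to(0',-1)$, the family $\{\sigma_j\}$ is normal, and a subsequence converges locally uniformly to a holomorphic $\sigma:\Omega\to\overline{M_P}$ with $\sigma(a)=(0',-1)$; by openness of the (nonconstant) limit, $\sigma(\Omega)\subset M_P$. Since $\Omega$ is bounded, the inverses $\sigma_j^{-1}:\Omega_j\to\Omega$ are likewise normal on compacta of $M_P$ and subconverge to $\tau:M_P\to\overline\Omega$. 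I would then invoke the standard theorem on convergence of biholomorphic mappings between varying domains (in the spirit of Bedford--Pinchuk): a Jacobian/Hurwitz argument ruling out degeneracy of $\sigma$ yields $\tau\circ\sigma=\mathrm{id}_\Omega$ and $\sigma\circ\tau=\mathrm{id}_{M_P}$, so $\sigma:\Omega\to M_P$ is biholomorphic. Property (a) is then immediate from $\sigma(a)=\lim_j\Lambda_j(\varphi_j(a))=\lim_j\Lambda_j(\eta_j)=(0',-1)$.

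For (b) I would exploit local boundary regularity at strongly pseudoconvex points. Because $P$ is a \psh\ polynomial of finite type with no pluriharmonic monomials, the Levi-degenerate locus of $\partial M_P$ is a thin real-analytic set, so strongly pseudoconvex boundary points of $M_P$ accumulate at the base point $0\in\partial M_P$; choose such points $\tilde\xi_j\to 0$. By the continuous (indeed H\"older) extension of biholomorphisms across strongly pseudoconvex boundary points --- a consequence of the Kobayashi-metric estimates near such points together with tautness of the target --- the map $\sigma^{-1}$ extends to a local homeomorphism near each $\tilde\xi_j$, carrying it to a boundary point $\xi_j:=\sigma^{-1}(\tilde\xi_j)\in\partial\Omega$; equivalently $\sigma$ extends to a homeomorphism of a neighborhood of $\xi_j$ onto a neighborhood of $\tilde\xi_j$. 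Since the scaling identifies $0\in\partial M_P$ with $\xi_0$ (the cluster set of $\sigma^{-1}$ at $0$ reduces to $\{\xi_0\}$), letting $\tilde\xi_j\to 0$ gives $\xi_j\to\xi_0$. The smooth extension of the automorphisms $\varphi_j$ needed to make this boundary bookkeeping rigorous is supplied by Condition R, which holds for the finite-type domain $\Omega$ (Catlin), via Bell--Catlin and Diederich--Fornaess.

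The main obstacle is the middle step: proving that the scaling limit $\sigma$ is a genuine biholomorphism rather than a degenerate or boundary-valued map. This hinges on (i) establishing the kernel convergence $\Omega_j\to M_P$ with enough uniformity to transfer metric estimates, (ii) the completeness and tautness of $M_P$, which is precisely where the hypotheses that $P$ be $\Lambda$-homogeneous, \psh, and free of pluriharmonic monomials are used, and (iii) the non-degeneracy of $\sigma$, for which the $\Lambda$-nontangential approach of $\eta_j$ is indispensable, since it is what keeps $\sigma_j(a)$ anchored at an interior point and prevents the Jacobians from collapsing. The boundary correspondence in (b) is a secondary difficulty, requiring the localization of the extension theory at strongly pseudoconvex points to the \emph{unbounded} model $M_P$.
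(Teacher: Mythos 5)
Your overall scaling framework matches the paper's, but there are two genuine gaps at precisely the points where the real work lies.

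First, the normality of the scaled sequence $\sigma_j=\Lambda_j\circ\varphi_j:\Omega\to\Omega_j$. You assert that it follows from tautness and complete hyperbolicity of $M_P$, together with the kernel convergence $\Omega_j\to M_P$ and the anchoring $\sigma_j(a)\to(0',-1)$. This does not follow: tautness of the \emph{limit} domain $M_P$ says nothing a priori about maps into the \emph{varying} domains $\Omega_j$, which are unbounded (the dilations blow up), so the images could escape to infinity or to $\partial M_P$ away from the anchor point. This is exactly the step the authors single out as the unresolved issue in the earlier work of Rong and Zhang (their Remark 1.2), and it is the raison d'\^etre of the paper's Section 4. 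The paper closes it by invoking the existence of a plurisubharmonic \emph{peak function} at $0\in\partial M_P$ (which exists precisely because the model is $h$-extendible, by Yu's theorem), and then running a Berteloot-type localization/attraction-of-analytic-discs argument (Lemma \ref{local}, Lemma \ref{Ga99}, Proposition \ref{pro-scaling}): any holomorphic disc into $\Omega_j$ whose center lies in a fixed compact subset of $M_P$ must map a uniformly smaller disc into a fixed compact subset of $M_P$, for $j$ large. Without this mechanism (or an equivalent one), your extraction of the limit $\sigma$ is unjustified. Note also that you assume $\Omega$ is bounded to get normality of $\sigma_j^{-1}$; the theorem does not assume boundedness, and the paper instead uses tautness of $\Omega$, which follows from Proposition \ref{T:8}(b) because an automorphism orbit accumulates at a finite type boundary point.

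Second, part (b). Your route --- pick strongly pseudoconvex points $\tilde\xi_j\in\partial M_P$ accumulating at $0$ and extend $\sigma^{-1}$ locally across them --- leaves the essential difficulty unaddressed: local extension theorems at strongly pseudoconvex points require control of the cluster set of $\sigma^{-1}$ there (in particular that it consists of finite boundary points of $\partial\Omega$ and not of interior points), and your claim that ``the cluster set of $\sigma^{-1}$ at $0$ reduces to $\{\xi_0\}$'' is exactly what needs proof, not a byproduct of the scaling. The paper attacks the problem from the other side: it first proves that for a sequence $\xi_j\to\xi_0$ in $\partial\Omega$ the cluster set of $\sigma$ at $\xi_j$ contains \emph{finite} points of $\partial M_P$. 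This is done by contradiction, using a bounded holomorphic function $f$ on $M_P$ with $f\not\equiv 0$ that tends to $0$ at infinity (from Yu's peak function construction with $N=1$), composing with $\sigma$, and applying the two-constant theorem on slices to force $f\circ\sigma\equiv 0$, which is absurd. Only then does it invoke Berteloot's Proposition 3 (attraction of analytic discs at finite type points) to get the local homeomorphic extension near $\xi_j$ and $\tilde\xi_j$. Your appeal to Condition R and Bell--Catlin is also not available as stated, since $\Omega$ is only assumed pseudoconvex and of finite type \emph{near} $\xi_0$, not globally.
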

\n
\begin{remark}
Recently, F. Rong and B. Zhang \cite{RZ16} gave a characterization of $h$-extendible model in which the sequence $\{\eta_j\}\subset \Omega$ converges nontangentially to an $h$-extendible boundary point $\xi_0\in \partial\Omega$. Their proof is based on the Pinchuk scaling method. However, the equation $(3.6)$ in page $905$ of \cite{RZ16}, which plays a crucial role to ensure the normality of the scaling sequence, is unclear to us. Fortunately, by using the attraction property of analytic discs based deeply on the existence of a plurisubharmonic peak function at the origin of the above model $M_P$, the normality of the scaling sequence is eventually verified (see Proposition \ref{pro-scaling}), and then the proof of Theorem \ref{maintheorem1} follows. As a consequence, the above-mentioned result of F. Rong and B. Zhang is obtained.

\n 
2. Notice that we do not know if the sequence $\{\tilde \xi_j\}$ can be chosen to be {\it bounded} even when $\partial \Omega$ is
{\it algebraic}. If this is the case then by using results in [Ber95] or [CP01] we can prove that $\sigma$ extends {\it holomorphically} through $\xi_0.$
\end{remark}

Now we move to the definition of squeezing function of a domain. Let $\Omega$ be a domain in $\mathbb C^n$ and $p \in \Omega$. For a holomorphic embedding $f\colon \Omega \to \mathbb B^n:=\B(0;1)$ with $f(p)=0$, we set
$$s_{\Omega,f}(p):=\sup\left \{r>0\colon B(0;r)\subset f(\Omega)\right\},$$
where $\mathbb B^n (z;r)\subset\mathbb{C}^n$  denotes the ball of radius $r$ with center at $z$. Then the \textit{squeezing function} $s_{\Omega}: \Omega\to\mathbb R$ is defined in \cite{DGF12} as
$$
s_{\Omega}(p):=\sup_{f} \left\{s_{\Omega,f}(p)\right\}.
$$
Note that $0 < s_{\Omega}(z)\leq 1$ for any $z \in \Omega$ and the squeezing function is clearly invariant under biholomorphic mappings.

In recent works \cite{DGF16, DFW14, KZ16} the authors proved that if $p$ is a strongly pseudoconvex boundary point, then $\lim\limits_{\Omega \ni z\to p\in \partial \Omega}s_{\Omega}(z)=1$. Conversely to this result, J. E. Forn{\ae}ss and F. E. Wold posed the following problem (see \cite[Problem $4.1$]{FW18}).
\begin{problem}
If $\Omega$ is a bounded pseudoconvex domain with smooth boundary, and if $\lim\limits_{\Omega \ni z\to p\in \partial \Omega}s_{\Omega}(z)=1$, then is the boundary of $\Omega$ strongly pseudoconvex at $p$?
\end{problem}

The main results around this problem are due to A. Zimmer \cite{Zim18a, Zim18b},  J. E. Forn{\ae}ss and F. E. Wold \cite{FW18}, S. Joo and K.-T. Kim \cite{JK18}, P. Mahajan and K. Verma \cite{MV19}. More precisely, in \cite{Zim18a, Zim18b} A. Zimmer  proved that the answer is affirmative if the domain is bounded convex with $\mathcal{C}^{2,\alpha}$-smooth boundary.  In \cite{FW18}, J. E. Forn{\ae}ss and F. E. Wold constructed a counter-example to this problem, that is, they constructed a bounded convex  $\mathcal{C}^2$-smooth domain  $\Omega\subset \mathbb C^n$ which is not strongly pseudoconvex, but 
 $$
 \lim\limits_{\Omega \ni z\to \partial \Omega}s_{\Omega}(z)=1.
 $$

Now let us consider a sequence $\{\eta_j\}\subset \Omega$ converging to an $h$-extendible boundary point $\xi_0\in \partial\Omega$. Suppose that $\Omega$ is pseudoconvex of finite type near $\xi_0$ and $\lim\limits_{j\to \infty} s_\Omega(\eta_j)=1$. It is known that if the sequence $\{\eta_j\}\subset \Omega$ converges to $\xi_0$ along the inner normal line to $\partial\Omega$ at $\xi_0$, then $\xi_0$ must be strongly pseudoconvex (see \cite{JK18} for $n=2$ and \cite{MV19} for general case). Moreover, this result was obtained in \cite{Ni18} for the case that $\{\eta_j\}\subset \Omega$ converges nontangentially to $\xi_0$. 
We are grateful to the referee for bringing to our attention the relevant reference \cite{Ni18} and for pointing out that a small modification of the methods given in \cite{Ni18} also yields Theorem \ref{theorem 1.2} below concerning the squeezing function. Nevertheless, we still believe that our method of proof has some merit since it uses the scaling technique, and therefore is quite different from \cite{Ni18}. 

The second aim in this paper is to prove the following theorem.
\begin{theorem}\label{theorem 1.2}
Let $\xi_0$ be an $h$-extendible boundary point of  a $\mathcal{C}^\infty$-smooth, bounded pseudoconvex  domain $\Omega$ in $\mathbb C^{n+1}$. Assume that $\lim\limits_{j\to \infty} s_\Omega(\eta_j)= 1$ for some sequence $\{\eta_j\}\subset \Omega$ converging $\Lambda$-nontangentially to $\xi_0$. Then $\xi_0$ is a strongly pseudoconvex point.
\end{theorem}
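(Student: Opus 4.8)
The plan is to reduce the statement to a fact about the associated polynomial model $M_P$: I will show that the hypothesis $\lim_j s_\Omega(\eta_j)=1$ forces $M_P$ to be biholomorphic to the unit ball $\B^{n+1}$, and then argue that this can happen only when the Catlin multitype at $\xi_0$ equals $(1,2,\ldots,2)$, i.e.\ when $\xi_0$ is strongly pseudoconvex.

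First I would set up the anisotropic (Pinchuk-type) scaling along $\{\eta_j\}$ via the scaling construction underlying Proposition \ref{pro-scaling}. Since $\eta_j\to\xi_0$ $\Lambda$-nontangentially, that proposition supplies biholomorphisms (scaling maps) $T_j$ on $\Omega$ such that the dilated domains $\Omega_j:=T_j(\Omega)$ converge, in the local Hausdorff/kernel sense, to $M_P$, while $T_j(\eta_j)\to q_0:=(0',-1)\in M_P$. The essential input here is the normality of the scaling sequence, which (as noted in the Remark) rests on the existence of a \psh\ peak function at the origin of $M_P$; I would simply invoke Proposition \ref{pro-scaling}. Because the squeezing function is a biholomorphic invariant, $s_{\Omega_j}(T_j(\eta_j))=s_\Omega(\eta_j)\to 1$.

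The heart of the argument is to transfer this to the limit. For each $j$ choose a near-extremal embedding $f_j\colon\Omega\to\B^{n+1}$ with $f_j(\eta_j)=0$ and $\B^{n+1}(0;r_j)\subset f_j(\Omega)$, $r_j\to 1$, and set $g_j:=f_j\circ T_j^{-1}\colon\Omega_j\to\B^{n+1}$, so that $g_j(T_j(\eta_j))=0$ and $\B^{n+1}(0;r_j)\subset g_j(\Omega_j)$. Since the $g_j$ are $\B^{n+1}$-valued and every compact subset of $M_P$ eventually lies in $\Omega_j$ (kernel convergence), a normal families argument yields a limit $g\colon M_P\to\overline{\B^{n+1}}$ with $g(q_0)=0$. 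The main obstacle is to prove that $g$ is a biholomorphism of $M_P$ onto $\B^{n+1}$: I must (i) rule out degeneration of $g$ at $q_0$, which follows because the images contain balls of radius $r_j\to 1$, so Cauchy/Schwarz estimates keep $g_j'(T_j(\eta_j))$ away from degeneracy; (ii) obtain injectivity of $g$ from that of the $g_j$ by a Hurwitz-type theorem; and (iii) show $g(M_P)=\B^{n+1}$, i.e.\ that no part of $M_P$ escapes to $\partial\B^{n+1}$, using once more $\B^{n+1}(0;r_j)\subset g_j(\Omega_j)$ together with the properness provided by the peak function. This gives $s_{M_P}(q_0)=1$ and, at the same time, a biholomorphism $M_P\cong\B^{n+1}$. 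Equivalently, one may invoke the stability of the squeezing function under kernel convergence to get $s_{M_P}(q_0)\ge\limsup_j s_{\Omega_j}(T_j(\eta_j))=1$, hence $=1$, and then the characterization ``$s=1$ at an interior point implies the ball''.

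Finally I would deduce strong pseudoconvexity from $M_P\cong\B^{n+1}$. Realizing $M_P$ (via a generalized Cayley transform) as a smoothly bounded pseudoconvex domain of finite type, the biholomorphism onto the ball extends smoothly to the boundary by the Bell--Catlin theorem (finite type implies Condition R), and smooth CR equivalence preserves the Catlin multitype of boundary points. As every boundary point of $\B^{n+1}$ has multitype $(1,2,\ldots,2)$, the origin of $\partial M_P$ must have multitype $(1,2,\ldots,2)$ as well; since $P$ is $\Lambda$-homogeneous and contains no pluriharmonic monomials, this forces $P$ to be a positive-definite Hermitian form, so $m_1=\cdots=m_n=2$. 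Because the model construction preserves the multitype at $\xi_0$, we conclude that $\xi_0$ has multitype $(1,2,\ldots,2)$, i.e.\ the Levi form is positive definite there, and hence $\xi_0$ is strongly pseudoconvex.
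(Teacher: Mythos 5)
Your first half---the anisotropic scaling along $\{\eta_j\}$, the transport of the near-extremal squeezing embeddings $f_j$ to $g_j=f_j\circ T_j^{-1}$, and the passage to the limit to obtain a biholomorphism $M_P\cong\B^{n+1}$---is essentially the paper's own argument: the paper packages your steps (i)--(iii) by applying Proposition \ref{pro-scaling} to $T_j\circ f_j^{-1}$, Montel's theorem to $f_j\circ T_j^{-1}$, and then the Cartan-type Proposition \ref{T:7} to conclude that the limit is a biholomorphism of $\B^{n+1}$ onto $M_P$. Up to that point there is no objection.

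The gap is in your last paragraph. You assert that $M_P$ can be realized, ``via a generalized Cayley transform,'' as a \emph{smoothly bounded} pseudoconvex domain of finite type, so that Bell--Catlin applies. For a general $\Lambda$-homogeneous plurisubharmonic $P$ this is unjustified, and the natural transform does not do what you want: the map $w\mapsto(1+w)/(1-w)$, $z_k\mapsto c_k z_k/(1-w)^{2/m_k}$ multiplies each $z_k$ by a complex number whose \emph{argument} depends on $w$, whereas $\Lambda$-homogeneity only controls the real dilations $\pi_t$, $t>0$. Hence this transform carries $M_P$ onto an egg-type bounded domain only when $P$ is invariant under the weighted circle action $z_k\mapsto e^{i\theta/m_k}z_k$ (as for $P=\sum_k|z_k|^{m_k}$); for a model such as $P(z)=|z|^4+\tfrac12\,\mathrm{Re}(z^3\bar z)$ on $\CC$ (which is $(1/4)$-homogeneous, plurisubharmonic, free of pluriharmonic monomials, and of finite type, so $M_P\subset\CC^2$ is a legitimate $h$-extendible model) it produces nothing. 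Taking the ball itself as the bounded realization makes the argument circular: the realization map is then the unknown biholomorphism, whose boundary behaviour near $0\in\partial M_P$ is exactly what is in question. And even when a bounded realization exists, its boundary is in general not smooth at the image of the point at infinity, while Bell--Catlin (Condition R) needs a globally smooth boundary; so the smooth extension, and with it your transfer of the Catlin multitype, is not available. This is precisely the difficulty the paper's proof is built to avoid: it keeps the \emph{unbounded} realization, writing $\B^{n+1}\cong\mathcal U$ (Siegel half-space) and $\psi\colon M_P\to\mathcal U$, uses a bounded holomorphic function on $\mathcal U$ vanishing at infinity to produce a single finite boundary point $(0',it_0)\in\partial M_P$ at which $\psi$ has a finite limit point, and invokes the Coupet--Pinchuk extension theorem \cite[Theorem $2.1$]{CP01}---a result tailored to rigid weighted-homogeneous unbounded domains---to extend $\psi$ holomorphically across $(0',it_0)$. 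Invariance of the Levi form then gives strong pseudoconvexity of $\partial M_P$ at $(0',it_0)$; since the Levi form of the rigid hypersurface $\partial M_P$ there is $\bigl(\partial^2P/\partial z_j\partial\bar z_k(0)\bigr)$ and each $P_{z_j\bar z_j}$ is $\Lambda$-homogeneous of weight $1-2/m_j$, positive definiteness forces $m_1=\cdots=m_n=2$ and $P(z)=|z|^2$ after a linear change, whence $\xi_0$ is strongly pseudoconvex. If you want to keep your multitype bookkeeping, the Bell--Catlin step must be replaced by a boundary extension argument valid for the unbounded model, which is exactly what the paper does.
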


The organization of this paper is as follows: In Sections $2$ and $3$, we recall some basic definitions and results needed later. In Section $4$, we verify the normality of the scaling sequence and then we give a proof of Theorem \ref{maintheorem1}. Finally, the proof of Theorem \ref{theorem 1.2} is given in Section $5$.
\section{The normality of sequences of biholomorphisms}
First of all, we recall the following definition (see \cite{GK} or \cite{DN09}). 
\begin{define} Let $\{\Omega_i\}_{i=1}^\infty$ be a sequence of open sets in a complex manifold $M$ and $\Omega_0 $ be an open set of $M$. The sequence $\{\Omega_i\}_{i=1}^\infty$ is said to converge to $\Omega_0 $ (written $\lim\Omega_i=\Omega_0$) if and only if 
\begin{enumerate}
\item[(i)] For any compact set $K\subset \Omega_0,$ there is an $i_0=i_0(K)$ such that $i\geq i_0$ implies that $K\subset \Omega_i$; and 
\item[(ii)] If $K$ is a compact set which is contained in $\Omega_i$ for all sufficiently large $i,$ then  $K\subset \Omega_0$.
\end{enumerate}  
\end{define}

Next, we need the following proposition, which is a generalization of the theorem of H. Cartan (see \cite{GK, TM, DN09}).
\begin{proposition} \label{T:7}  Let $\{A_i\}_{i=1}^\infty$ and $\{\Omega_i\}_{i=1}^\infty$ be sequences of domains in a complex manifold $M$ with $\lim A_i=A_0$ and $\lim \Omega_i=\Omega_0$ for some (uniquely determined) domains $A_0$, $\Omega_0$ in $M$. Suppose that $\{f_i: A_i \to \Omega_i\} $ is a sequence of biholomorphic maps. Suppose also that the sequence $\{f_i: A_i\to M \}$ converges uniformly on compact subsets of $ A_0$ to a holomorphic map $F:A_0\to M $ and the sequence $\{g_i:=f^{-1}_i: \Omega_i\to M \}$ converges uniformly on compact subsets of $\Omega_0$ to a holomorphic map $G:\Omega_0\to M $.  Then either of the following assertions holds. 
\begin{enumerate}
\item[(i)] The sequence $\{f_i\}$ is compactly divergent, i.e., for each compact set $K\subset \Omega_0$ and each compact set $L\subset \Omega_0$, there exists an integer $i_0$ such that $f_i(K)\cap L=\emptyset$ for $i\geq i_0$; or
\item[(ii)] There exists a subsequence $\{f_{i_j}\}\subset \{f_i\}$  such that the sequence $\{f_{i_j}\}$ converges uniformly on compact subsets of $A_0$ to a biholomorphic map $F: A_0 \to \Omega_0$.
\end{enumerate}
\end{proposition}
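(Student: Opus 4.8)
The plan is to prove the dichotomy by contraposition: assuming that $\{f_i\}$ is \emph{not} compactly divergent, I will produce the biholomorphism asserted in (ii). Non-divergence means there are compact sets $K\subset A_0$ and $L\subset\Omega_0$ and infinitely many indices $i$ with $f_i(K)\cap L\neq\emptyset$; choose $p_i\in K$ with $f_i(p_i)\in L$. After passing to a subsequence (which I rename $\{f_i\}$), compactness gives $p_i\to p\in K\subset A_0$ and $f_i(p_i)\to q\in L\subset\Omega_0$. Since $f_i\to F$ uniformly on a compact neighborhood of $p$, I obtain $F(p)=\lim f_i(p_i)=q\in\Omega_0$, so the limit map $F$ sends at least one point of $A_0$ into the open set $\Omega_0$.

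Next I would upgrade this single hit into local inverse relations by exploiting the functional equations $g_i\circ f_i=\mathrm{id}_{A_i}$ and $f_i\circ g_i=\mathrm{id}_{\Omega_i}$. Because $F(p)=q\in\Omega_0$ and $\Omega_0$ is open, continuity of $F$ furnishes a neighborhood $V\ni p$ with $\overline{F(V)}$ a compact subset of $\Omega_0$; on $V$ the values $f_i(z)\to F(z)$ are eventually trapped in a fixed compact subset of $\Omega_0$, so $g_i(f_i(z))\to G(F(z))$ uniformly on $V$. As $g_i\circ f_i=\mathrm{id}$, this yields $G\circ F=\mathrm{id}$ on $V$, and hence $G(q)=p\in A_0$. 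The symmetric reasoning, now based at $q$, gives $F\circ G=\mathrm{id}$ on a neighborhood $W\ni q$. In particular $DF(p)$ is invertible, so $F$ is a local biholomorphism at $p$.

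I would then globalize by analytic continuation. Let $E\subset A_0$ be the open set $\{z: F(z)\in\Omega_0\}$ and $E_0$ its connected component containing $p$; since $G\circ F$ is holomorphic on $E_0$ and equals $\mathrm{id}$ on $V$, the identity theorem forces $G\circ F=\mathrm{id}$ on all of $E_0$, so $F|_{E_0}$ is injective with holomorphic inverse and hence a biholomorphism onto its open image $F(E_0)\subset\Omega_0$. Defining $E_0'\subset\Omega_0$ symmetrically as the component of $\{w: G(w)\in A_0\}$ containing $q$, the same argument gives $F\circ G=\mathrm{id}$ on $E_0'$; a short connectedness check shows $F(E_0)=E_0'$ and $G(E_0')=E_0$, so that $F:E_0\to E_0'$ and $G:E_0'\to E_0$ are mutually inverse biholomorphisms.

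Finally—and this is where I expect the real difficulty—I must show $E_0=A_0$ and $E_0'=\Omega_0$. Since $A_0$ is a domain and $E_0$ is open and nonempty, it suffices to prove that $E_0$ is closed in $A_0$, i.e. that $F$ cannot carry an interior limit point $z_0\in A_0\cap\partial E_0$ to $\partial\Omega_0$. If $z_k\in E_0$ with $z_k\to z_0$, then $F(z_k)\to F(z_0)\in\overline{\Omega_0}$, and the only obstruction to concluding $z_0\in E_0$ is the possibility $F(z_0)\in\partial\Omega_0$. To exclude this I would combine the open mapping property of $F$ near $z_0$ (valid once $\det DF\not\equiv 0$, which follows from the invertibility at $p$ and the identity theorem) with the containment $F(A_0)\subset\overline{\Omega_0}$, extracted from $f_i(A_i)\subset\Omega_i$ together with $\Omega_i\to\Omega_0$: an interior point mapped to $\partial\Omega_0$ would, by openness, force nearby image points to lie outside $\overline{\Omega_0}$, a contradiction. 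Running the symmetric argument for $G$ gives $E_0'=\Omega_0$, and then $F:A_0\to\Omega_0$ is the biholomorphism of alternative (ii). The delicate point throughout is precisely this control of boundary escape, i.e. ensuring that the limit maps respect the boundaries of the limit domains; it is the step that genuinely uses the convergences $A_i\to A_0$ and $\Omega_i\to\Omega_0$ rather than mere normality of $\{f_i\}$.
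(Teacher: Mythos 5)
Your first three steps (producing $p$ with $F(p)=q\in\Omega_0$, the local identities $G\circ F=\mathrm{id}$ near $p$ and $F\circ G=\mathrm{id}$ near $q$, and the mutually inverse biholomorphisms $F\colon E_0\to E_0'$, $G\colon E_0'\to E_0$) are correct, and they follow the classical Cartan-type route used in the references the paper cites for this proposition (Greene--Krantz, Thai--Minh, Thai--Thu); the paper itself gives no proof, so that standard argument is the comparison target. The trouble is in your last step, exactly where you yourself locate the difficulty: both ingredients you invoke there are false as stated. First, $F(A_0)\subset\overline{\Omega_0}$ does \emph{not} follow from $f_i(A_i)\subset\Omega_i$ together with $\Omega_i\to\Omega_0$. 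Condition (ii) in the definition of convergence of domains constrains only a \emph{fixed} compact set lying in $\Omega_i$ for all large $i$; a sequence of points $w_i\in\Omega_i$ can converge to a point far outside $\overline{\Omega_0}$. For example, let $\Omega_i$ be the unit disc $D$ with a thin tube attached to a small disc centered at a point $c_i$ of modulus $2$, the direction of the tube rotating with $i$ so that no fixed compact set outside $\overline{D}$ stays in $\Omega_i$ for all large $i$: then $\Omega_i\to D$, yet points of $\Omega_i$ accumulate on the circle of radius $2$. (In such examples the maps are compactly divergent, so the proposition survives --- but it shows your containment genuinely requires the non-divergence hypothesis and an argument, not just domain convergence.) Second, $\det DF\not\equiv 0$ does \emph{not} give the open mapping property at $z_0$ when $n\geq 2$: the map $(z_1,z_2)\mapsto(z_1,z_1z_2)$ has Jacobian $z_1\not\equiv 0$ but is not open at the origin. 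Openness at the specific point $z_0\in A_0\cap\partial E_0$ would need $\det DF(z_0)\neq 0$ or injectivity near $z_0$ --- precisely what is unknown there.

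Both gaps are repaired by the tool your proof is missing: the several-variables Hurwitz theorem. Each $f_i$ is injective, $f_i\to F$ locally uniformly on the domain $A_0$, and $\det DF(p)\neq 0$; hence $F$ is injective on all of $A_0$, and an injective holomorphic map between equidimensional complex manifolds is everywhere nondegenerate and open. With openness in hand, for any $z_0\in A_0$ a Rouch\'e/degree argument gives a closed ball $\overline{B}$ around $F(z_0)$ and a ball $B(z_0,\delta)\Subset A_0$ such that $\overline{B}\subset f_i\left(B(z_0,\delta)\right)\subset\Omega_i$ for all large $i$; now condition (ii) of domain convergence applies to the \emph{fixed} compact set $\overline{B}$ and yields $F(z_0)\in\Omega_0$ directly --- no closure statement, no boundary-escape analysis, and in fact no need for $E_0$ at all. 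Then $G\circ F$ is defined and holomorphic on all of $A_0$, equals the identity near $p$, hence everywhere by the identity theorem; symmetrically $F\circ G=\mathrm{id}_{\Omega_0}$, so $F\colon A_0\to\Omega_0$ is the required biholomorphism. As written, your argument does not close; with Hurwitz plus the Rouch\'e-plus-condition-(ii) step it does, and it then coincides with the standard proof.
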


In addition, we prepare the following proposition (see  \cite[Proposition $2.1$]{Ber94} or \cite[Proposition $2.2$]{DN09}).
\begin{proposition}\label{T:8}  Let $M$ be a domain  in a complex manifold $X$ of dimension $n$ and $\xi_0\in \partial M$. Assume that $\partial M$ is pseudoconvex and of finite type near $\xi_0$. 
\begin{enumerate}
\item[(a)] Let $\Omega $ be a domain  in a complex manifold $Y$ of dimension $m$. Then every sequence $\{\varphi_j\}\subset \mathrm{Hol}(\Omega,M)$ converges unifomly on compact subsets of $\Omega $ to $\xi_0$ if and only if $\lim   \varphi_j(a)=\xi_0$ for some $a\in \Omega$.
\item[(b)] Assume, moreover, that there exists a sequence $\{\varphi_j\}\subset Aut(M)$  such that $\lim \varphi_j(a)=\xi_0 $ for some $a\in M$. Then $M$ is taut.
\end{enumerate}
\end{proposition}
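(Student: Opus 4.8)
The plan is to derive both assertions from a single geometric ingredient: since $\partial M$ is \psh\ and of finite type near $\xi_0$, the point $\xi_0$ admits a continuous local \psh\ peak function $\psi$, genuinely \psh\ on a full \nhd\ $U$ of $\xi_0$ in $X$, with $\psi(\xi_0)=0$ and $\psi<0$ on $(\overline M\cap U)\setminus\{\xi_0\}$, together with a companion antipeak function controlling $M$ from below near $\xi_0$. I would treat these as known consequences of finite type pseudoconvexity and use them to localize every limiting argument at $\xi_0$.

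For part (a) the forward implication is immediate, since uniform convergence on compacta to $\xi_0$ specializes to $\varphi_j(a)\to\xi_0$. For the converse I argue by the subsequence principle. The antipeak function prevents the images from escaping to the ends of $M$, so by Montel's theorem every subsequence of $\{\varphi_j\}$ has a further subsequence converging uniformly on compacta to a holomorphic map $F\colon\Omega\to\overline M$. Since $F(a)=\lim\varphi_j(a)=\xi_0$, the point $a$ lies in the open set $\Omega':=F^{-1}(U)$, on which $\psi\circ F$ is \psh\ and $\le 0$ (as $F(\Omega')\subset\overline M\cap U$) with $\psi\circ F(a)=\psi(\xi_0)=0$. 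The maximum principle forces $\psi\circ F\equiv 0$ on the component of $\Omega'$ through $a$, and since $\psi$ vanishes on $\overline M\cap U$ only at $\xi_0$, this gives $F\equiv\xi_0$ near $a$, hence $F\equiv\xi_0$ on $\Omega$ by the identity theorem. As every subsequential limit is the single point $\xi_0$, the full sequence converges uniformly on compacta of $\Omega$ to $\xi_0$.

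For part (b) I would establish tautness of $M$ by verifying that $\mathrm{Hol}(\mathbb D,M)$ is a normal family, i.e.\ that any sequence of discs from the unit disc $\mathbb D$ is either compactly divergent or has a subsequence converging locally uniformly to an element of $\mathrm{Hol}(\mathbb D,M)$. Near $\xi_0$ this normality is furnished directly by the peak function, being essentially the content of part (a) read for holomorphic discs, so the only issue is to propagate it across all of $M$. Here the hypothesis enters decisively: applying part (a) with $\Omega=M$ to the orbit $\varphi_j(a)\to\xi_0$ yields $\varphi_j\to\xi_0$ locally uniformly on $M$. Given a sequence $\{f_k\}\subset\mathrm{Hol}(\mathbb D,M)$ that is not compactly divergent, so that after extraction $f_k(z_k)\to q\in M$ for some $z_k\to z_0$, I would transport the relevant portion of each disc toward $\xi_0$ by a suitable automorphism $\varphi_{j(k)}$, apply the local normality available at $\xi_0$, and pull the resulting convergent subsequence back by $\varphi_{j(k)}^{-1}$ to produce a locally uniformly convergent subsequence of $\{f_k\}$ with limit in $M$.

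The technical heart of part (a) is the availability, at a finite type pseudoconvex point, of a peak function that is \psh\ on a full \nhd\ of $\xi_0$ in $X$ (so that the maximum principle applies at the genuinely interior point $a\in\Omega'$) together with an antipeak function guaranteeing normality when $M$ is not relatively compact; without finite type these local potential-theoretic tools may fail, and the subsequential limit could be a nonconstant map into $\partial M$ or escape the manifold entirely. In part (b) the corresponding obstacle is that finite type is assumed only at the single point $\xi_0$, so peak-function normality is available only locally there; the role of the non-compact automorphism orbit, interpreted through the attraction property of part (a), is precisely to globalize this one-point normality to tautness of $M$.
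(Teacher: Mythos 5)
First, note that the paper itself does not prove Proposition \ref{T:8}: it is quoted from \cite[Proposition 2.1]{Ber94} and \cite[Proposition 2.2]{DN09}, and the machinery behind it reappears in this paper as Lemma \ref{local} and Proposition \ref{pro-scaling}. Measured against that standard argument, your proposal has a genuine gap in each part. In part (a), the step ``by Montel's theorem every subsequence has a further subsequence converging to a holomorphic map $F\colon\Omega\to\overline M$'' is unjustified: $M$ is merely a domain in a complex manifold, not assumed bounded, hyperbolic, or taut, and the antipeak function is a purely local object near $\xi_0$ --- it controls nothing about portions of the image lying far from $\xi_0$, so it cannot ``prevent the images from escaping to the ends of $M$''. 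Normality of families of maps into $M$ is essentially what part (b) (tautness) asks one to prove, so your argument for (a) quietly presupposes a statement of the same strength. The correct substitute for Montel is the attraction property of analytic discs: the peak/antipeak pair yields a lower bound on the Kobayashi metric near $\xi_0$ (Sibony's method, i.e.\ the paper's Lemma \ref{local}), so any holomorphic map of a ball whose center lands close to $\xi_0$ sends a fixed smaller ball into a prescribed neighborhood of $\xi_0$; an open-closed chaining argument over the connected set $\Omega$ (exactly as in the proof of Proposition \ref{pro-scaling}) then upgrades $\varphi_j(a)\to\xi_0$ to locally uniform convergence to the constant $\xi_0$. Your maximum-principle/identity-theorem endgame is correct once a limit $F$ exists, but producing that limit is the entire difficulty.

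In part (b) the fatal step is pulling back by the \emph{varying} inverses $\varphi_{j(k)}^{-1}$. By part (a) (adapted to moving base points), the transported discs $\varphi_{j(k)}\circ f_k$ converge to the constant map $\xi_0$, and they do so for \emph{every} sequence of discs meeting a fixed compact set, compactly divergent or not; a constant limit therefore carries no information about $\{f_k\}$. Moreover $\{\varphi_{j(k)}^{-1}\}$ is not equicontinuous near $\xi_0$ (these maps re-expand shrinking neighborhoods of $\xi_0$ onto large parts of $M$), so convergence of $\varphi_{j(k)}\circ f_k$ yields no convergence of $f_k=\varphi_{j(k)}^{-1}\circ(\varphi_{j(k)}\circ f_k)$. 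The standard proof instead fixes a \emph{single} automorphism $\varphi_{j_0}$ carrying a neighborhood of the accumulation value $q=\lim f_k(z_k)\in M$ into the localization zone near $\xi_0$, applies Lemma \ref{local} to confine $\varphi_{j_0}\circ f_k$ on a fixed subdisc inside a bounded piece $M\cap U$, uses tautness of $M\cap U$ --- which does not come from part (a) but from hyperconvexity of bounded pseudoconvex domains with smooth (or Lipschitz) boundary, in the spirit of Diederich--Forn{\ae}ss and Kerzman--Rosay --- to extract a limit lying in $M\cap U$, and only then pulls back by the fixed biholomorphism $\varphi_{j_0}^{-1}$, finishing with a chaining/diagonal argument over the unit disc. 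So both the ``local normality'' you invoke and the pull-back mechanism need to be replaced by these two distinct ingredients.
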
  
\begin{remark} \label{r1}  By Proposition \ref{T:8} and by the hypothesis of Theorem \ref{maintheorem1}, for each compact subset $K\Subset \Omega$ and each neighborhood $U$ of $\xi_0$, there exists an integer $j_0$ such that $\varphi_j(K)\subset \Omega\cap U$ for all $j\geq j_0$. Moreover, $\Omega$ is taut. 
\end{remark}

\section{Catlin's multitype and the $h$-extendibility} 
\subsection{Catlin's multitype} 
For the convenience of the exposition, let us recall \emph{Catlin's multitype}~(for more details, we refer to \cite{Cat84, Yu92} and the references therein). Let $\Omega$ be a domain in $\mathbb C^n$ and $\rho$ be a defining function for $\Omega$ near $z_0\in \partial\Omega$. Let us denote by $\Gamma^n$ the set of all $n$-tuples of numbers $\mu=(\mu_1,\ldots,\mu_n)$ such that
\begin{itemize}
\item[(i)] $1\leq \mu_1\leq \cdots\leq\mu_n\leq +\infty$;
\item[(ii)] For each $j$, either $\mu_j=+\infty$ or there is a set of non-negative integers $k_1,\ldots,k_j$ with $k_j>0$ such that
\[
\sum_{s=1}^j \frac{k_s}{\mu_s}=1.
\]
\end{itemize}

A weight $\mu\in \Gamma^n$ is called \emph{distinguished} if there exist holomorphic coordinates $(z_1,\ldots,z_n)$ about $z_0$ with $z_0$ maps to the origin such that 
\[
D^\alpha \overline{D}^\beta \rho(z_0)=0~\text{whenever}~\sum_{i=1}^n\frac{\alpha_i+\beta_i}{\mu_i}<1.
\]
Here $D^\alpha$ and $\overline{D}^\beta$ denote the partial differential operators
\[
\frac{\partial^{|\alpha|}}{\partial z_1^{\alpha_1}\cdots \partial z_n^{\alpha_n }}~\text{and}~\frac{\partial^{|\beta|}}{\partial \bar z_1^{\beta_1}\cdots \partial \bar z_n^{\beta_n }},
\]
respectively. 
\begin{define}
The \emph{multitype} $\mathcal{M}(z_0)$ is defined to be the smallest weight $\mathcal{M}=
(m_1,\ldots,m_n)$ in $\Gamma^n$~(smallest in the lexicographic sense) such that $\mathcal{M}\geq \mu$ for every distinguished weight $\mu$.
\end{define}
 \subsection{The $h$-extendibility }
In what follows, we call a multiindex $(\lambda_1,\lambda_2, \ldots, \lambda_n)$ a \emph{multiweight} if $1\geq \lambda_1\geq \cdots\geq \lambda_n$. Now let us recall the following definitions (cf. \cite{Yu94, Yu95}).  
\begin{define} \label{def-28}Let $f(z)$ be a function on $\mathbb  C^n$ and let $\Lambda=(\lambda_1,\lambda_2, \ldots, \lambda_n)$ be a multiweight. For any real number $t\geq 0$, set
$$
\pi_t(z)=(t^{\lambda_1}z_1,t^{\lambda_2}z_2, \ldots,t^{\lambda_n}z_n).
$$ 
We say that $f$ is \emph{$\Lambda$-homogeneous with weight $\alpha$} if $f(\pi_t(z))=t^\alpha f(z)$ for every $t\geq 0$ and $z\in \mathbb C^n$. In case $\alpha=1$, then $f$ is simply called \emph{$\Lambda$-homogeneous}.
\end{define} 

For a multiweight $\Lambda$, the following function
$$
\sigma(z)=\sigma_\Lambda(z):=\sum_{j=1}^n |z_j|^{1/\lambda_j}
$$
 is $\Lambda$-homogeneous. Moreover, for a multiweight $\Lambda$ and a real-valued $\Lambda$-homogeneous function $P$, we define a homogeneous model $D_{\Lambda,P}$ as follows:
 $$
D_{\Lambda,P}=\left\{ (z,w)\in \mathbb C^n\times \mathbb C\colon \mathrm{Re}(w)+ P(z)<0 \right\}.
 $$
 \begin{define} Let $D_{\Lambda,P}$ be a homogeneous model. Then $D_{\Lambda,P}$ is called \emph{$h$-extendible} if there exists a $\Lambda$-homogeneous $\mathcal{C}^1$ function $a(z)$ on $\mathbb C^n\setminus\{0\}$ satisfying the following conditions:
 \begin{itemize}
 \item[(i)] $a(z)>0$ whenever $z\ne 0$;
 \item[(ii)] $P(z)-a(z)$ is plurisubharmonic on $\mathbb C^n$. 
  \end{itemize}
We will call $a(z)$ a \emph{bumping function}. 
 \end{define}
 \begin{remark} \label{smooth-bumping} In this paper, our model $D_{\Lambda,P}$ is always assumed to be of finite type.  So, by \cite[Theorem $2.1$]{Yu94} the bumping function $a(z)$ must be $\mathcal{C}^\infty$ on $\mathbb C^n\setminus \{0\}$ and $P(z)-a(z)$ is strictly plurisubharmonic on $\mathbb C^n\setminus \{0\}$. Moreover, $\Lambda=(1/m_1,\ldots,1/m_n)$, where $(1, m_1,\ldots, m_n)$ is the multitype of  $D_{\Lambda,P}$ at $0$. For several equivalent conditions to the $h$-extendibility, we refer the reader to \cite{Yu94}.
 \end{remark}
 
 \begin{remark}
Let $a(z)$ be a bumping function. Then there is a constant $C>0$ such that
 $$
 C \sigma(z)\leq a(z)\leq C^{-1} \sigma(z),~\forall~z\in \mathbb C^n. 
 $$
 \end{remark}

By a pointed domain $(\Omega,p)$ in $\mathbb C^{n+1}$ we mean that $\Omega$ is a smooth pseudoconvex domain in $\mathbb C^{n+1}$ with $p\in \partial\Omega$. Let $\rho$ be a local defining function for $\Omega$ near $p$ and let the multitype $\mathcal{M}(p)=(1,m_1,\ldots,m_n)$ be finite. Moreover, since $\Omega$ is pseudoconvex, the integers $m_1,\ldots,m_n$ are all even. 
 
 By the definition of multitype, there are distinguished coordinates $(z,w)=(z_1,\ldots,z_n,w)$ such that $p=0$ and $\rho(z,w)$ can be expanded near $0$ as follows:
$$
\rho(z,w)=\mathrm{Re}(w)+P(z)+R(z,w),
$$ 
 where $P$ is a $(1/m_1,\ldots,1/m_n)$-homogeneous plurisubharmonic polynomial that contains no pluriharmonic terms, $R$ is smooth and satisfies 
 $$
 |R(z,w)|\leq C \left( |w|+ \sum_{j=1}^n |z_j|^{m_j} \right)^\gamma,
 $$ 
 for some constant $\gamma>1$ and $C>0$. 
 
 In what follows, the weight of any multiindex $K=(k_1,\ldots,k_n)\in \mathbb N^n$ with respect to $\Lambda=(1/m_1,\ldots,1/m_n)$ is given by
   $$ wt(K)=\sum_{j=1}^n \frac{k_j}{m_j}.$$    
We note that  $wt(K+L)=wt(K)+wt(L)$ for any $K, L\in \mathbb N^n$. In addition, $\lesssim$ and $\gtrsim$ denote inequality up to a positive constant. Moreover, we will use $\approx $ for the combination of $\lesssim$ and $\gtrsim$. 
\begin{define}\label{def-order} We call $M_P=\{(z,w)\in \mathbb C^n\times \mathbb C\colon \mathrm{Re}(w)+P(z)<0\}$ an \emph{associated model} for $(\Omega,p)$. If the pointed domain $(\Omega,p)$ has an $h$-extendible  associated model, we say that $(\Omega,p)$ is \emph{$h$-extendible}. In this circumstance, we say that a sequence $\{\eta_j=(\alpha_j,\beta_j)\}\subset  \Omega$ \emph{converges $\Lambda$-nontangentially to $p$} if $|\mathrm{Im}(\beta_j)|\lesssim |\mathrm{dist}(\eta_j,\partial \Omega)|$ and $ \sigma(\alpha_j) \lesssim |\mathrm{dist}(\eta_j,\partial \Omega)|$, where 
$$
\sigma(z)=\sum_{k=1}^n |z_k|^{m_k}.
$$ 
Here and in what follows, $\mathrm{dist}(z,\partial\Omega)$ denotes the Euclidean distance from $z$ to $\partial\Omega$.
\end{define}
\begin{remark} It is well-known that $\{\eta_j\}\subset \Omega$ converges nontangentially to $p$ if $|\mathrm{Im}(\beta_j)|\lesssim |\mathrm{dist}(\eta_j,\partial \Omega)|$ and $|\alpha_{j k}|\lesssim|\mathrm{dist}(\eta_j,\partial \Omega)|$ for every $1\leq k\leq n$, where $\alpha_j=(\alpha_{j 1},\ldots,\alpha_{j n}$). Nevertheless, such sequence converges $\Lambda$-nontangentially to $p$ if $|\mathrm{Im}(\beta_j)|\lesssim |\mathrm{dist}(\eta_j,\partial \Omega)|$ and $|\alpha_{j k}|^{m_j}\lesssim|\mathrm{dist}(\eta_j,\partial \Omega)|$ for every $1\leq k\leq n$.
\end{remark}

We also need the following definition (cf. \cite{Yu95}).
\begin{define} Let $\Lambda=(\lambda_1,\ldots,\lambda_n)$ be a fixed $n$-tuple of positive numbers and $\mu>0$. We denote by $\mathcal{O}(\mu,\Lambda)$ the set of smooth functions $f$ defined near the origin of $\mathbb C^n$ such that
$$
D^\alpha \overline{D}^\beta f(0)=0~\text{whenever}~ \sum_{j=1}^n (\alpha_j+\beta_j)\lambda_j \leq \mu.
$$
If $n=1$ and $\Lambda = (1)$ then we use $\mathcal{O}(\mu)$ to denote the functions vanishing to order at least $\mu$ at the origin.
\end{define}

Now let us recall the following proposition, whose proof easily follows from the Taylor expansion (see \cite[Proposition $4.9$]{Yu95}).
\begin{proposition}\label{weight-small}
\begin{itemize}
\item[(i)] If $f\in \mathcal{O}(\mu,\Lambda)$ then $\frac{\partial f}{\partial z_j}$ and $\frac{\partial f}{\partial \bar z_j}$ are in $\mathcal{O}(\mu-\lambda_j,\Lambda)$ for $j=1,\ldots,n$.
\item[(ii)] Suppose that $f_i,~1\leq i\leq N$, are functions with $f_i\in \mathcal{O}(\mu_i,\Lambda)$. Then 
$$
\prod_{i=1}^N f_i \in \mathcal{O}(\mu,\Lambda), ~\text{where}~\mu=\sum_{i=1}^N \mu_i.
$$
\item[(iii)] If $f\in \mathcal{O}(\mu,\Lambda)$, then there are constants $C,\delta>0$ such that
$|f(z)|\leq C(\sigma_\Lambda(z))^{\mu+\delta}$ for all $z$ in a small neighborhood of $0$.

\end{itemize}
\end{proposition}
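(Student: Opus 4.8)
The plan is to dispatch (i) and (ii) directly from the definition of $\mathcal{O}(\mu,\Lambda)$ and to reserve the genuine work for (iii). For (i), the point is simply that differentiation shifts the multiindex: if $g=\partial f/\partial z_j$ then $D^\alpha\overline{D}^\beta g=D^{\alpha+e_j}\overline{D}^\beta f$, where $e_j$ denotes the $j$-th unit multiindex. Whenever $\sum_{i}(\alpha_i+\beta_i)\lambda_i\le \mu-\lambda_j$, the pair $(\alpha+e_j,\beta)$ has weight $\sum_i(\alpha_i+\beta_i)\lambda_i+\lambda_j\le \mu$, so $D^{\alpha+e_j}\overline{D}^\beta f(0)=0$ by hypothesis; hence $\partial f/\partial z_j\in\mathcal{O}(\mu-\lambda_j,\Lambda)$, and the same computation handles $\partial f/\partial\bar z_j$.

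For (ii), by induction it is enough to treat two factors $f_1\in\mathcal{O}(\mu_1,\Lambda)$ and $f_2\in\mathcal{O}(\mu_2,\Lambda)$. Given $(\alpha,\beta)$ with weight at most $\mu_1+\mu_2$, I would expand $D^\alpha\overline{D}^\beta(f_1f_2)(0)$ by the Leibniz rule into a finite sum of terms proportional to $D^{\alpha'}\overline{D}^{\beta'}f_1(0)\cdot D^{\alpha-\alpha'}\overline{D}^{\beta-\beta'}f_2(0)$. If such a term were nonzero, the two hypotheses would force the weight of $(\alpha',\beta')$ to exceed $\mu_1$ and that of $(\alpha-\alpha',\beta-\beta')$ to exceed $\mu_2$; adding these and using additivity of the weight would give weight of $(\alpha,\beta)>\mu_1+\mu_2$, a contradiction. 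Thus every term vanishes and $f_1f_2\in\mathcal{O}(\mu_1+\mu_2,\Lambda)$.

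The substance is in (iii). The key elementary inequality is $|z_j|\le(\sigma_\Lambda(z))^{\lambda_j}$, valid for all $z$ since $|z_j|^{1/\lambda_j}\le\sigma_\Lambda(z)$; raising to appropriate powers and multiplying gives $|z^\alpha\bar z^\beta|\le(\sigma_\Lambda(z))^{w}$, where $w=\sum_i(\alpha_i+\beta_i)\lambda_i$ is the weight. I would first produce the exponent $\delta$: because each set $\{(\alpha,\beta): w\le B\}$ is finite (one has $\alpha_i+\beta_i\le B/\lambda_i$), the achievable weights form a locally finite set, so there is a smallest achievable weight strictly greater than $\mu$, which I call $\mu+\delta$ with $\delta>0$. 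Next I would apply Taylor's theorem to the smooth function $f$ to an order $N$ chosen so large that $(N+1)\lambda_{\min}\ge\mu+\delta$, where $\lambda_{\min}=\min_i\lambda_i$. Every monomial $z^\alpha\bar z^\beta$ with a nonzero coefficient in the Taylor polynomial has weight $>\mu$, hence $\ge\mu+\delta$, so on the region $\sigma_\Lambda(z)\le1$ each such monomial is bounded by $(\sigma_\Lambda(z))^{\mu+\delta}$; the remainder is $O(|z|^{N+1})$, and since $|z|\lesssim(\sigma_\Lambda(z))^{\lambda_{\min}}$ near the origin it is likewise dominated by $(\sigma_\Lambda(z))^{(N+1)\lambda_{\min}}\le(\sigma_\Lambda(z))^{\mu+\delta}$. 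Summing the finitely many contributions yields $|f(z)|\le C(\sigma_\Lambda(z))^{\mu+\delta}$ near $0$.

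I expect the only real obstacle to be the bookkeeping in (iii): confirming that the gap $\delta$ is strictly positive (i.e. local finiteness of the weight set) and correctly converting the ordinary vanishing order $N+1$ of the smooth remainder into the weighted exponent $\mu+\delta$ via the crude comparison $|z|\lesssim(\sigma_\Lambda(z))^{\lambda_{\min}}$. Everything else is routine once the inequality $|z_j|\le(\sigma_\Lambda(z))^{\lambda_j}$ is in hand.
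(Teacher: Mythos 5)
Your proof is correct, and it follows exactly the route the paper indicates: the paper gives no argument of its own but cites \cite[Proposition 4.9]{Yu95} and notes the result ``easily follows from the Taylor expansion,'' which is precisely your treatment — (i) and (ii) by shifting multiindices and the Leibniz rule, and (iii) by Taylor expansion combined with the inequality $|z_j|\le(\sigma_\Lambda(z))^{\lambda_j}$ and the local finiteness of the set of achievable weights. Your filling in of the positivity of $\delta$ and the handling of the remainder via $|z|\lesssim(\sigma_\Lambda(z))^{\lambda_{\min}}$ is exactly the bookkeeping the cited proof carries out.
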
 
By Proposition \ref{weight-small}, one easily obtains the following corollary.
\begin{corollary} \label{tangent-2} If $f\in \mathcal{O}(\mu,\Lambda)$, then there are constants $C,\delta>0$ such that
$| D^{p} \overline{D}^q f(z)|\leq C(\sigma_\Lambda(z))^{\mu-wt(p)-wt(q)+\delta}$ for every multi-indices $p,q\in \mathbb N^n$ with $wt(p)+wt(q)<\mu$ and for all $z$ in a small neighborhood of $0$.
\end{corollary}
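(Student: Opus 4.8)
The plan is to deduce the Corollary directly from the two parts of Proposition \ref{weight-small}, by first transporting the weighted vanishing of $f$ onto its derivatives and then applying the size estimate. First I would establish the differentiation rule $D^p \overline{D}^q f \in \mathcal{O}\bigl(\mu - wt(p) - wt(q),\Lambda\bigr)$ whenever $f \in \mathcal{O}(\mu,\Lambda)$. This is obtained by iterating Proposition \ref{weight-small}(i): each application of $\partial/\partial z_j$ (resp. $\partial/\partial \bar z_j$) lowers the weight parameter by exactly $\lambda_j$, so peeling off the $p_j$ holomorphic and $q_j$ antiholomorphic derivatives in the $j$-th variable one at a time lowers the parameter by a total of $\sum_{j=1}^n (p_j+q_j)\lambda_j = wt(p)+wt(q)$. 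Since we work in the regime $wt(p)+wt(q)<\mu$, every intermediate weight parameter remains strictly positive, so part (i) applies at each stage without degeneracy, and the resulting function $g := D^p \overline{D}^q f$ is again smooth.

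Second, I would apply Proposition \ref{weight-small}(iii) to this single function $g$, which by the previous paragraph lies in $\mathcal{O}(\mu',\Lambda)$ with $\mu' := \mu - wt(p) - wt(q) > 0$. Part (iii) then furnishes constants $C_{p,q},\delta_{p,q}>0$ with $|g(z)| \le C_{p,q}\,(\sigma_\Lambda(z))^{\mu' + \delta_{p,q}}$ for $z$ in a neighborhood of $0$, which is exactly the asserted estimate for that fixed pair $(p,q)$.

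The only point requiring a little care, which I would flag as the main (albeit mild) obstacle, is the uniformity of the constants over all admissible pairs $(p,q)$. Here I would note that the constraint $wt(p)+wt(q)<\mu$ together with $\lambda_j>0$ forces $p_j\lambda_j \le wt(p) < \mu$ and likewise $q_j\lambda_j < \mu$ for every $j$; hence only finitely many pairs $(p,q)$ satisfy $wt(p)+wt(q)<\mu$. One may therefore set $C := \max_{p,q} C_{p,q}$ and $\delta := \min_{p,q}\delta_{p,q} > 0$, both of which exist by finiteness. Shrinking the neighborhood so that $0<\sigma_\Lambda(z)<1$ there, replacing $\delta_{p,q}$ by the smaller $\delta$ only increases the power (hence weakens the upper bound), so each individual estimate persists with the common constants; this yields the single pair $C,\delta$ valid for all admissible $(p,q)$ simultaneously and completes the argument.
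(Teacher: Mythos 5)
Your proposal is correct and is essentially the argument the paper intends: the paper states the corollary as an easy consequence of Proposition \ref{weight-small}, namely iterating part (i) to get $D^p\overline{D}^q f\in\mathcal{O}(\mu-wt(p)-wt(q),\Lambda)$ and then applying part (iii). Your extra care about uniformity of $C,\delta$ via the finiteness of admissible pairs $(p,q)$ (and shrinking the neighborhood so that $\sigma_\Lambda(z)<1$) is a valid and welcome way to make the ``easily obtains'' fully rigorous.
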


\section{Proof of Theorem \ref{maintheorem1}}

This section is devoted to a proof of Theorem \ref{maintheorem1}. Throughout this section, the domain $\Omega$ and the boundary point $\xi_0\in \partial \Omega $ are assumed satisfy the hypothesis of Theorem \ref{maintheorem1}. Let $\rho$ be a local defining function for $\Omega$ near $\xi$ and let the multitype $\mathcal{M}(p)=(1,m_1,\ldots,m_n)$ be finite. Especially, because of the pseudoconvexity of $\Omega$, the integers $m_1,\ldots,m_n$ are all even. Let us denote by $\Lambda=(1/m_1,\ldots,1/m_n)$. By the definition of multitype, there are distinguished coordinates $(\tilde z,\tilde w)=(\tilde z_1,\ldots,\tilde z_n,\tilde w)$ such that $\xi_0=0$ and $\rho(\tilde z,\tilde w)$ can be expanded near $0$ as follows:
$$
\rho(\tilde z,\tilde w)=\mathrm{Re}(\tilde w)+P(\tilde z)+Q(\tilde z,\tilde w),
$$ 
 where $P$ is a $\Lambda$-homogeneous plurisubharmonic polynomial that contains no pluriharmonic monomials, $Q$ is smooth and satisfies 
 $$
 |Q(\tilde z,\tilde w)|\leq C \left( |\tilde w|+ \sum_{j=1}^n |\tilde z_j|^{m_j} \right)^\gamma,
 $$ 
 for some constant $\gamma>1$ and $C>0$. 
 
 By hypothesis of Theorem \ref{maintheorem1}, there exist a sequence $\{\varphi_j\}\subset \mathrm{Aut}(\Omega)$ and a point $a\in \Omega$ such that 
$\eta_j:=\varphi_j(a)$ converges $\Lambda$-nontangentially to $\xi_0$. Let us write $\eta_j=(\alpha_j,\beta_j)=(\alpha_{j1},\ldots,\alpha_{jn},\beta_j)$. Then one has 
\begin{itemize}
\item[(a)] $|\mathrm{Im}(\beta_j)|\lesssim |\mathrm{dist}(\eta_j,\partial \Omega)|$;
\item[(b)] $|\alpha_{jk}|^{m_k}\lesssim |\mathrm{dist}(\eta_j,\partial \Omega)|$ for $1\leq k\leq n$.
\end{itemize}

By following the proofs of Lemmas $4.10$, $4.11$ in \cite{Yu95}, after a change of variables
\[\begin{cases}
z=\tilde z;\\
 w=\tilde w+ b_1(\tilde z)\tilde w+b_2(\tilde z)\tilde w^2+b_3(\tilde z),
\end{cases}
\]
where $b_1,b_2, b_3$ are smooth functions of $\tilde z$ satisfying $b_j=O(|\tilde z|^2)$, $j=1,2,3$, there are local holomorphic coordinates $(z,w)$ in which $\xi_0=0$ and $\Omega$ can be described near $0$ as follows: 
 $$
 \Omega=\left\{\rho(z,w)=\mathrm{Re}(w)+ P(z) +R_1(z) + R_2(\mathrm{Im} w)+(\mathrm{Im} w) R(z)<0\right\}.
 $$ 
 Here $P$ is a $\Lambda$-homogeneous plurisubharmonic real-valued polynomial containing no pluriharmonic terms, $R_1\in \mathcal{O}(1, \Lambda),R\in \mathcal{O}(1/2, \Lambda) $, and $R_2\in \mathcal{O}(2)$. We would like to emphasize that in the new coordinates the sequence $\{\eta_j\}$ still has the properties $\mathrm{(a)}$ and $\mathrm{(b)}$. 

For any sequence $\{\eta_j=(\alpha_j,\beta_j)\}$ of points converging $\Lambda$-nontangentially to the origin in $U_0\cap\{\rho<0\}=:U_0^-$, we associate with a sequence of points $\eta_j'=(\alpha_{1j}, \cdots, \alpha_{nj}, a_j +\epsilon_j+i b_j)$, where $\epsilon_j>0$ and $\beta_j=a_j+i b_j$, such that $\eta_j'=(\alpha_j',\beta_j')$ is in the hypersurface $\{\rho=0\}$ for every $j\in\mathbb N^*$. We note that $\epsilon_j\approx \mathrm{dist}(\eta_j,\partial \Omega)$. Now let us consider the sequences of dilations $\Delta^{\epsilon_j}$ and translations $L_{\eta_j'}$, defined respectively by
$$
\Delta^{\epsilon_j}(z_1,\ldots,z_n,w)=\left(\frac{z_1}{\epsilon_j^{1/m_1}},\ldots,\frac{z_n}{\epsilon_j^{1/m_n}},\frac{w}{\epsilon_j}\right)
$$
and
$$
L_{\eta_j}(z,w)=(z,w)-\eta_j=(z-\alpha_j,w-\beta_j).
$$
Under the change of variables $(\tilde z,\tilde w):=\Delta^{\epsilon_j}\circ L_{\eta_j}(z,w)$, i.e.,
\[
\begin{cases}
w-\beta_j= \epsilon_j\tilde{w}\\
z_k-\alpha_{j k}=\epsilon_j^{1/m_k}\tilde{z}_k,\, k=1,\ldots,n,
\end{cases}
\]
one sees that $\Delta^{\epsilon_j}\circ L_{\eta_j'}(\alpha_j,\beta_j)=(0,\cdots,0,-1)$ for every $j\in \mathbb N^*$. Moreover, by using Taylor's theorem, the hypersurface $\Delta^{\epsilon_j}\circ L_{\eta_j'}(\{\rho=0\}) $ is defined by an equation of the form
\begin{align*}
\begin{split}
&0=\epsilon_j^{-1}\rho\left (L_{\eta_j}^{-1}\circ \left(  \Delta^{\epsilon_j} \right )^{-1}(\tilde z,\tilde w)\right)\\
&= \mathrm{Re} (\tilde w)+ R_2'(b_j) \mathrm{Im}(\tilde w) +  \mathrm{Im}(\tilde w) R(\alpha_j)+\epsilon_j^{-1}o(\epsilon_j)+P(\tilde z)\\
&+2\mathrm{Re}\sum\limits_{\substack{|p|>0\\ wt(p)\leq 1}} \frac{D^pP(\alpha_j)}{p!} \epsilon_j^{wt(p)-1} (\tilde z)^p+ \sum\limits_{\substack{|p|,|q|>0\\ wt(p+q)<1}}\frac{D^{p} \overline{D}^q P(\alpha_j)}{p!q!} \epsilon_j^{wt(p+q)-1} (\tilde z)^p (\overline{\tilde z})^q\\
&+2\mathrm{Re}\sum\limits_{\substack{|p|>0\\ wt(p)\leq 1}} \frac{D^p R_1(\alpha_j)}{p!} \epsilon_j^{wt(p)-1} (\tilde z)^p
+ \sum\limits_{\substack{|p|,|q|>0\\ wt(p+q)\leq1}} \frac{D^{p} \overline{D}^q R_1(\alpha)}{p!q!} \epsilon_j^{wt(p+q)-1} (\tilde z)^p (\overline{\tilde z})^q\\
&+ \epsilon_j^{-1} b_j \Big(2\mathrm{Re}\sum\limits_{\substack{|p|>0\\ wt(p)\leq 1}} \frac{D^p R(\alpha_j)}{p!} \epsilon_j^{wt(p)} (\tilde z)^p
+ \sum\limits_{\substack{|p|,|q|>0\\ wt(p+q)\leq1}} \frac{D^{p} \overline{D}^q R(\alpha_j)}{p!q!} \epsilon_j^{wt(p+q)} (\tilde z)^p (\overline{\tilde z})^q\Big).\\
\end{split}
\end{align*}

Since $\{(\alpha_j,\beta_j)\}_j$ is a sequence of points converging $\Lambda$-nontangentially to the origin in $U_0^-$, without loss of generality, we may assume that 
$$
\lim\limits_{j\to \infty}\pi_{1/\epsilon_j}(\alpha_j)=\alpha\in \mathbb C^n,
$$
where $\pi_t(z)=(t^{1/m_1} z_1,\ldots, t^{1/m_n} z_n)$ for $t\geq 0$. Hence, by Proposition \ref{weight-small} and Corollary \ref{tangent-2} one has
\begin{itemize}
\item[(i)] $\lim\limits_{j\to \infty}\frac{D^pP(\alpha_j)}{p!} \epsilon_j^{wt(p)-1}=\lim\limits_{j\to \infty}\frac{D^pP(\pi_{1/\epsilon_j}(\alpha_j))}{p!}= \frac{D^pP(\alpha)}{p!}$;
\item[(i)] $\lim\limits_{j\to \infty}\frac{D^p R_1(\alpha_j)}{p!} \epsilon_j^{wt(p)-1}=\lim\limits_{j\to \infty}\frac{D^p R(\alpha_j)}{p!} \epsilon_j^{wt(p)}=0 $ whenever  $wt(p)\leq 1$;
\item[(ii)] $\lim\limits_{j\to \infty} \frac{D^{p} \overline{D}^q P(\alpha_j)}{p!q!} \epsilon_j^{wt(p+q)-1}=\lim\limits_{j\to \infty} \frac{D^{p} \overline{D}^q P(\pi_{1/\epsilon_j}(\alpha_j))}{p!q!}= \lim\limits_{j\to \infty} \frac{D^{p} \overline{D}^q P(\alpha)}{p!q!}$ whenever $wt(p+q)< 1$;
\item[(iii)]  $\lim\limits_{j\to \infty} \frac{D^{p} \overline{D}^q R_1(\alpha_j)}{p!q!} \epsilon_j^{wt(p+q)-1}=\lim\limits_{j\to \infty} \frac{D^{p} \overline{D}^q R(\alpha_j)}{p!q!} \epsilon_j^{wt(p+q)}=0$ whenever $wt(p)+wt(q)\leq 1$;
\item[(iv)] $\lim\limits_{j\to \infty} R_2'(b_j) =\lim\limits_{j\to \infty} R(\alpha_j)= 0$.
\end{itemize}

Therefore, after taking a subsequence if necessary, we may assume that the sequence of domains $\Omega_j:=\Delta^{\epsilon_j}\circ L_{\eta_j'}(U_0^-) $ converges normally to the following model
$$
M_{P,\alpha}:=\left \{(\tilde z,\tilde w)\in \mathbb C^n\times\mathbb C\colon \mathrm{Re}(\tilde w)+P(\tilde z+\alpha)-P(\alpha)<0\right\},
$$
which is obviously biholomorphically equivalent to the model $M_P$.

Without loss of generality, in what follows we always assume that $\{\Omega_j\}$ converges to $M_P$. 

Now we need the following lemma which precises \cite[Lemme de localisation]{Ber95} (see also \cite[Lemma $2.1.1$]{Ga99}).
\begin{lemma}[Localization lemma]\label{local} Let $D$ be a domain in $\mathbb C^n$ and $\zeta_0\in \partial D$. Suppose that there exists a function $\varphi$ which is continuous on $\overline{D}\cap \{|z-\zeta_0|\leq R\}$ such that

\n 	
(i) $\varphi$ is plurisubharmonic on $D\cap \{|z-\zeta_0|< R\}$.

\n 
(ii) $\varphi>0$ on $\overline{D}\cap \{|z-\zeta_0|\leq r\}~(r<R)$.

\n
(iii) $\varphi<0$ on $\overline{D}\cap \{r'\leq |z-\zeta_0|\leq R'\}~(r<r'<R'<R)$.

\n 
Let $U:= D\cap \{|z-\zeta_0|< \fr{r}6\}, V:= D\cap \{|z-\zeta_0|< \fr{r}5\}.$
Then, there exists a constant $\tau_0\in (0,1)$ such that every holomorphic maps $f\colon \B^k \to D,$ where $\B^k$ is the unit ball in $\mathbb C^k,$ satisfies
$$f(0)\in U \, \Rightarrow\, f(\B^k (0, \tau_0)) \subset V,$$
where $\B^k (a,\tau_0):=\{z\in \mathbb C^k\colon |z-a|<\tau_0\}$ is the open ball of radius $\tau_0$ with center at $a$.
\end{lemma}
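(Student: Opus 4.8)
The plan is to reduce the statement to the one–variable case $k=1$ and then run a normal–families argument in which the plurisubharmonic function $\varphi$ supplies the compactness that the (possibly unbounded) target $D$ does not provide by itself.

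\textbf{Reduction to analytic discs.} First I would show it suffices to find $\tau_0\in(0,1)$ that works for every holomorphic disc. Given $f\colon\B^k\to D$ with $f(0)\in U$ and $a\in\B^k(0,\tau_0)\setminus\{0\}$, set $v=a/|a|$ and $g(\lambda)=f(\lambda v)$ for $\lambda$ in the unit disc $\B^1\subset\CC$. Then $g\colon\B^1\to D$ is holomorphic, $g(0)=f(0)\in U$, and $g(|a|)=f(a)$. Hence a radius $\tau_0$ with the property that \emph{every} holomorphic $g\colon\B^1\to D$ satisfying $g(0)\in U$ has $g(\{|\lambda|<\tau_0\})\subset V$ works verbatim for $f$: since $|a|<\tau_0$ we get $f(a)=g(|a|)\in V$ (and $f(0)\in U\subset V$). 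So it is enough to produce such a $\tau_0$ in the disc case.

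\textbf{The barrier and the set-up.} Suppose, for contradiction, that no uniform $\tau_0$ works for discs. Then there are holomorphic $g_j\colon\B^1\to D$ with $g_j(0)\in U$ and points $\lambda_j\to0$ with $g_j(\lambda_j)\notin V$; after a rotation I may assume $\lambda_j\in(0,1)$. Using continuity of $\varphi$ on the compact set $\overline D\cap\{|z-\zeta_0|\le R'\}$, put $M:=\max\varphi$ there, $-\delta:=\max_{\overline D\cap\{r'\le|z-\zeta_0|\le R'\}}\varphi<0$ by (iii), and $m:=\min_{\overline D\cap\{|z-\zeta_0|\le r/5\}}\varphi>0$ by (ii) (note $r/5<r$). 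For each $j$ let $W_j$ be the connected component containing $0$ of $\{\lambda\in\B^1\colon|g_j(\lambda)-\zeta_0|<R'\}$. On $W_j$ the composition $h_j:=\varphi\circ g_j$ is subharmonic by (i); it satisfies $h_j(0)=\varphi(g_j(0))\ge m$ (since $g_j(0)\in U$ gives $|g_j(0)-\zeta_0|<r/6$), $h_j\le M$, and $\limsup h_j\le-\delta$ along the inner boundary $\partial W_j\cap\{|g_j-\zeta_0|=R'\}$ by (iii).

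\textbf{Localization and conclusion (the crux).} The heart of the matter is to convert these sign conditions into uniform equicontinuity of $\{g_j\}$ at $0$. The two–constants (harmonic–measure) inequality for $h_j$ on $W_j$ gives $m\le h_j(0)\le M-(M+\delta)\,\omega_j(0)$, where $\omega_j(0)$ is the harmonic measure at $0$ of the inner boundary inside $W_j$; hence $\omega_j(0)\le (M-m)/(M+\delta)<1$, uniformly in $j$. This bound prevents the discs from reaching the sphere $\{|z-\zeta_0|=R'\}$ too cheaply and should confine each $g_j$ to the \emph{bounded} set $D\cap\{|z-\zeta_0|<R'\}$ on a fixed sub-disc $\{|\lambda|<\sigma\}$ with $\sigma\in(0,1)$ independent of $j$ — this is exactly the attraction property of analytic discs furnished by the plurisubharmonic barrier. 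Granting this, Montel's theorem yields $g_j\to g_\infty$ locally uniformly on $\{|\lambda|<\sigma\}$, and equicontinuity at $0$ together with $\lambda_j\to0$ forces $\lim_j g_j(\lambda_j)=g_\infty(0)=\lim_j g_j(0)$. But $|g_j(0)-\zeta_0|<r/6$ gives $|g_\infty(0)-\zeta_0|\le r/6$, whereas $|g_j(\lambda_j)-\zeta_0|\ge r/5$ gives $|g_\infty(0)-\zeta_0|\ge r/5$, which is impossible as $r/6<r/5$. I expect the confinement/equicontinuity step to be the main obstacle: because $D$ need not be bounded, ordinary normality fails and the barrier must be used to make the localization \emph{uniform} over the whole family. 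Once the uniform disc radius $\tau_0$ is secured, the reduction of the first paragraph delivers the claimed $\tau_0$ for maps from $\B^k$.
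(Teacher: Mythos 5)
Your reduction to analytic discs and your two--constants inequality are both correct, but the proof has a genuine gap exactly where you flag it: the passage from the bound $\omega_j(0)\le (M-m)/(M+\delta)<1$ to the uniform confinement of $g_j(\{|\lambda|<\sigma\})$ in the bounded set $D\cap\{|z-\zeta_0|<R'\}$ is not a routine step, and in fact the implication you are ``granting'' is false as a statement about planar domains. A bound at the single point $0$ on the harmonic measure of the inner boundary does not force $W_j$ to contain a disc of fixed radius: take $W_j=\B^1\setminus \overline{\B^1(\epsilon_j,\,\epsilon_j^{100})}$ with $\epsilon_j\to 0$; comparison with the explicit harmonic function $\log\bigl(2/|\lambda-\epsilon_j|\bigr)/\log\bigl(2/\epsilon_j^{100}\bigr)$ of the surrounding annulus shows that the harmonic measure at $0$ of the inner circle is $\approx 1/100$, bounded well away from $1$, yet $W_j$ contains no fixed disc around $0$ because the obstacle approaches $0$. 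The obstruction is structural: $h_j=\varphi\circ g_j$ is subharmonic, so mean-value and two-constants inequalities only give \emph{upper} bounds at interior points, whereas confinement is a pointwise statement requiring control from below at every $\lambda$ near $0$, where you know nothing about $h_j$. So conditions (i)--(iii), fed through harmonic measure at the center alone, cannot close the argument; some strict positivity must be injected, and ruling out slit-like preimage components is precisely the hard analytic content of the lemma.

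That missing ingredient is what the paper's proof (following Berteloot and Sibony) supplies. The paper first patches $\varphi$ against a quadratic to produce a \emph{bounded negative} plurisubharmonic function $\tilde\varphi$ on all of $D$ such that $\tilde\varphi-|z|^2$ is plurisubharmonic on $D\cap\{|z-\zeta_0|<r\}$; the strict plurisubharmonicity near $\zeta_0$, via Sibony's maximum-principle argument, yields a uniform lower bound
$$F_D(z,v)\ \ge\ \sqrt{\tfrac{2}{r}}\, e^{\frac{M}{2}\tilde\varphi(z)}\|v\|\ \ge\ c\,\|v\|,\qquad z\in D\cap\{|z-\zeta_0|<\tfrac r4\},$$
for the Kobayashi infinitesimal metric, the last inequality because $\tilde\varphi$ is bounded. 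The conclusion then follows from the distance-decreasing property: $d_D(f_j(0),f_j(a_j))\le d_{\B^k}(0,a_j)\to 0$, while any curve from $f_j(0)\in U$ to a point outside $V$ must traverse the shell between the radii $r/6$ and $r/5$, so the metric bound keeps $d_D(f_j(0),f_j(a_j))$ bounded below --- a contradiction. Your closing Montel argument would indeed finish once confinement is known (it parallels Step 2 of the paper's Lemma \ref{Ga99}, which \emph{uses} the localization lemma), but to obtain confinement you would have to prove some version of the Sibony--Berteloot estimate; that is a genuinely different mechanism from the two-constants theorem and is the actual core of the proof.
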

\begin{proof} We follow closely the proof of the localization lemma given in \cite{Ber95}, which in turns is based on Theorem 3 in \cite{Si81}.
Using a patching technique as in \cite{Ber95}, we can construct a bounded negative \psh\ function $\tilde \va$ on $D$ such that $\tilde \va-\vert z\vert^2$
is \psh\ on $D\cap \{|z-\zeta_0|< r\}.$ Then, by an ingenious argument using the maximum principle we obtain the following lower bound for the infinitesimal Kobayashi metric
$$F_D (z,v) \ge \sqrt{\frac{2}{r}} e^{\frac{M}2 \tilde \va (z)} \Vert v\Vert, \forall v \in \mathbb C^n, \forall z \in D\cap \{|z-\zeta_0|< \fr{r}4\}.$$
Now suppose the lemma is false, then there exists a sequence of holomorphic maps $f_j: \B^k \to D$ and $a_j \to 0, a_j \in \B^k$ with 
$f_j (0) \in V$ but $f_j (a_j) \not\in U.$
By the decreasing property of the Kobayashi pseudo-distance we obtain 
$$d_D (f_j(0), f_j (a_j)) \le d_{\B^k} (0, a_j) \to 0 \ \text{as}\ j \to \infty.$$
On the other hand, we can find $b_j \in D \cap \{|z-\zeta_0|= \fr{r}5\}$ such that
$$d_D (f_j(0), f_j (a_j))+\frac{1}{j} \ge d_D (f_j(0), b_j).$$
For a real smooth curve $\gamma \subset D$ joining $f_j (0)$ and $b_j$ we have
$$k_D (f_j (0), b_j) \ge \int_0^1 F_D (\gamma (t), \gamma' (t)) 
\ge  \sqrt{\frac{2}{r}} e^{\frac{M}2 \inf\limits_{z \in D} \tilde \va (z)} \Vert f_j (0) -b_j \Vert.$$
It follows that 
$\varliminf\limits_{j \to \infty} k_D (f_j (0), b_j) >0.$ 
Putting all these estimates together we obtain a contradiction.
\end{proof}	
We need the following technical lemma which plays a key role in the proof of Theorem \ref{maintheorem1}.
\begin{lemma}\label{Ga99}  Let $\{\Omega_j\}$ be a sequence of domains in $\mathbb C^{n+1}$ converging to $M_P$. Let 
$K$ be a compact subset of $M_P.$ Then there exists a compact subset $L$ of $M_P$, an index $j(K) \ge 1$, 
and $\tau \in (0,1)$ having the following properties:
If $g: \B^k  \to \Omega_j$ is holomorphic for $j \ge j(K)$ and $g(0)\in K$ then $g(\B^k (0,\tau)) \subset L.$
\end{lemma}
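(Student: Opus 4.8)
The plan is to argue by contradiction and to reduce the uniform attraction statement to a uniform lower bound for the Kobayashi pseudodistances of the domains $\Omega_j$. Suppose the conclusion fails. Fixing an exhaustion $L_1\subset L_2\subset\cdots$ of $M_P$ by compact sets and taking radius $1/\nu$, I would extract indices $j_\nu\to\infty$, holomorphic maps $g_\nu\colon \B^k\to \Omega_{j_\nu}$ with $g_\nu(0)\in K$, and points $a_\nu\in \B^k(0,1/\nu)$ with $g_\nu(a_\nu)\notin L_\nu$. Passing to a subsequence, $g_\nu(0)\to p_0\in K\subset M_P$, while $g_\nu(a_\nu)$ leaves every compact subset of $M_P$; after a further subsequence $g_\nu(a_\nu)$ converges in $\overline{M_P}\cup\{\infty\}$ to a point $q$ lying in $\partial M_P$ or equal to $\infty$. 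The two competing estimates are then the following. On the one hand, the decreasing property of the Kobayashi pseudodistance gives $d_{\Omega_{j_\nu}}(g_\nu(0),g_\nu(a_\nu))\le d_{\B^k}(0,a_\nu)\to 0$ since $a_\nu\to 0$. On the other hand, I will show $\liminf_\nu d_{\Omega_{j_\nu}}(g_\nu(0),g_\nu(a_\nu))>0$, which is the desired contradiction.

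The lower bound is produced by exactly the mechanism internal to the proof of Lemma \ref{local}. Assume first $q\in\partial M_P$ is finite. A plurisubharmonic peak-type function for $M_P$ at $q$ yields, through the maximum-principle estimate recalled there, an infinitesimal bound $F_{\Omega_{j_\nu}}(z,v)\ge c\|v\|$ valid on a fixed ball $\{|z-q|<r/4\}$, with $c>0$ and $r>0$ independent of $\nu$. Since $g_\nu(0)\to p_0\ne q$ remains outside this ball (choosing $r$ small enough that $|p_0-q|>r/4$) while $g_\nu(a_\nu)\to q$ lies deep inside it, every curve joining $g_\nu(0)$ to $g_\nu(a_\nu)$ must cross the shell $\{r/8\le |z-q|<r/4\}$, on which $F_{\Omega_{j_\nu}}\ge c\|\cdot\|$; integrating gives $d_{\Omega_{j_\nu}}(g_\nu(0),g_\nu(a_\nu))\ge c\,(r/8)>0$, uniformly in $\nu$.

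The delicate point is that this estimate must hold for the varying domains $\Omega_{j_\nu}$ rather than for $M_P$ itself, so I would invoke the convergence $\Omega_j\to M_P$ to transfer the peak function. Near the finite-type boundary point $q$ the defining functions of $\Omega_{j_\nu}$ converge (as furnished by the scaling construction producing $\Omega_j\to M_P$), so $\overline{\Omega_{j_\nu}}$ converges locally to $\overline{M_P}$. Because $M_P$ is a smooth pseudoconvex hypersurface of finite type ($h$-extendible by hypothesis, see Remark \ref{smooth-bumping}), it carries a local plurisubharmonic peak function at $q$; taking it plurisubharmonic on an ambient neighborhood, its restriction is plurisubharmonic on $\Omega_{j_\nu}$, and the strict peak inequalities (ii)--(iii) of Lemma \ref{local} persist for $\overline{\Omega_{j_\nu}}$ for all large $\nu$ by the local convergence of the closures. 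Thus the constants $r,c$ depend only on the model $M_P$, not on $\nu$. (At the distinguished boundary points $(0,is)$ this peak function is exactly the one at the origin, mentioned after Theorem \ref{maintheorem1}, transported by the imaginary translations $(z,w)\mapsto(z,w+is)$, which are automorphisms of $M_P$.)

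It remains to exclude $q=\infty$, that is, escape to infinity, and I expect this to be the main obstacle, since it is where the non-compactness of the model competes with uniformity in $j$. The scheme is the same, with the finite peak function replaced by a plurisubharmonic function on $M_P$ peaking at the end $\infty$, whose existence follows from the $h$-extendibility of $M_P$: the bumping function supplies a bounded, strictly plurisubharmonic minorant controlling the Kobayashi metric outside a fixed ball, giving $F_{\Omega_{j_\nu}}(z,v)\ge c'\|v\|$ there and rehabilitating the crossing argument of the previous paragraph. The genuine difficulty is therefore to construct the peak and antipeak objects of the model so that they transfer to the entire sequence $\{\Omega_{j_\nu}\}$ with $j$-independent constants, and to ensure that the escape-to-infinity alternative is ruled out by a single global plurisubharmonic object rather than point by point. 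Once these functions are secured, the contradiction closes and the lemma follows.
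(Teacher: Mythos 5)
Your proposal has genuine gaps, and they sit exactly at the two places you yourself flag as ``delicate'' and ``the genuine difficulty''; the paper's proof is organized precisely so that neither issue ever arises. First, your Kobayashi-distance lower bound needs a plurisubharmonic peak function at an \emph{arbitrary, unknown} boundary point $q\in\partial M_P$, transferred to the varying domains $\Omega_{j_\nu}$ with constants independent of $\nu$. Neither ingredient is available. The only peak function at hand (from \cite{Yu94}) lives at the origin, is plurisubharmonic on $M_P$ and continuous on $\overline{M_P}$; it is not defined on any ambient neighborhood, so ``its restriction to $\Omega_{j_\nu}$'' is meaningless, because the domains $\Omega_j$ converge to $M_P$ without being contained in it. Moreover, the existence of local psh peak functions at \emph{every} finite-type pseudoconvex boundary point in $\mathbb C^{n+1}$, $n\ge 2$, is a nontrivial matter that the paper never claims (your translation trick only reaches the points $(0',is)$, not points with $z\neq 0$). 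Second, the case $q=\infty$ --- the real danger in an unbounded model --- is left open in your write-up: you state that an antipeak-type object built from the bumping function would ``rehabilitate'' the argument, but you do not construct it, and securing it uniformly in $j$ is exactly the hard part. So the contradiction is never actually closed.

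The paper avoids both problems with two devices missing from your scheme. (i) Rather than transferring peak functions to each $\Omega_j$, it encloses all $\Omega_j$ ($j$ large) in a \emph{single fixed} enlarged domain $\widetilde\Omega_r=M_{P,r}\cup\left(\mathbb C^{n+1}\setminus\overline{B_r}\right)$, where $M_{P,r}=\{\re(w)+P(z)<\ve_0\}$ is the model pushed out by $\ve_0$; a $w$-translate of the origin's peak function is then a legitimate peak function for this one domain, and Lemma \ref{local} applied once to $\widetilde\Omega_r$ gives a localization statement (Step 1) that is automatically uniform in $j$, since every $g\colon\B^k\to\Omega_j$ is also a map into $\widetilde\Omega_r$. (ii) Because $P$ is $\Lambda$-homogeneous, the anisotropic dilations $\Delta^{\lambda}$ preserve $M_P$; choosing $\lambda_0$ large moves the given compact set $K$ into the small neighborhood $\widetilde U'$ of the origin, whence Step 1 traps $g_j(\B^k(0,\tau_0))$ inside the fixed bounded set $\left(\Delta^{\lambda_0}\right)^{-1}(\widetilde U)$. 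Montel's theorem then yields a locally uniformly convergent subsequence, and $g_j(\xi_j)\to g(0)=\lim g_j(0)\in K\subset M_P$ contradicts $g_j(\xi_j)\to\partial M_P\cup\{\infty\}$. This single argument kills escape to the boundary and escape to infinity simultaneously, using a peak function at exactly one point. Without analogues of (i) and (ii) --- a uniform containment replacing your unproved ``transfer,'' and the dilation trick replacing peak functions at arbitrary points and at infinity --- your approach cannot be completed as written.
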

\begin{proof} We split the proof into two steps.
	
\n {\it Step 1.} We show that there exist neighborhoods $\widetilde U, \widetilde U'$ of the origin and $\tau_0>0$
such that:  {\it For $j$ large enough, if $f\colon \B^k \to \Omega_j$ is holomorphic and 
$f(0)\in \widetilde U'$ then $f(\B^k_{\tau_0}) \subset \widetilde U.$}
For this purpose, we note that
there exists a plurisubharmonic peak function for $M_P$ at $(0',0)$ (see \cite{Yu94}). Thus we may find 
$0<r<r'<R'<R,$ a \psh\ function $\va$ on $M_P$ which is continuous on $\overline{M_P}$ such that $\va>0$ on $M_P \cap \{\vert z\vert<r\}$ and $\va<0$ on  $M_P \cap \{r'<\vert z\vert<R'\}.$

By setting $\ve_0:=\fr{r}{7}$, since the sequence $\{\Omega_j\}$ converges to $M_P$ as $j\to \infty,$ we can find $j_0 \ge 1$ and a large open ball $B_r$ around $\xi_0:=(0,\ve_0)$ such that for $j \ge j_0$ we have
$$\Omega_j \subset \widetilde \Omega_r:=M_{P,r} \cup (\mathbb C^{n+1} \setminus \overline {B_r}),$$
where $M_{P,r}:=\{(z,w): \re(w)+P(z)<\ve_0\}.$
Now consider the following \nhd s of $(0,0)$
$$\widetilde U:= \{|z-\xi_0|<\fr{r}5\}, \widetilde U':=\{\vert z -\xi_0\vert <\fr{r}6\}.$$
By applying Lemma 1 to $\widetilde \Omega_r,$ the peaking function $\psi (z,w):=\va(z, w-\ve_0)$ and the datum $r',r,R',R$ we obtain $\tau_0>0$ satisfying the conclusion of \emph{Step 1}.

\n 
{\it Step 2.} We argue by contradiction. If the lemma is false then we can find a sequence
$\B^k \ni \xi_j \to 0,$ holomorphic maps $g_j: \B^k \to \Omega_j$ such that 
\begin{equation} \label{conv}
g_j (0) \in K \subset M_P \ \text{but}\  g_j (\xi_j) \to \partial M_P \cup \{\infty\}.
\end{equation}
The key step in deriving a contradiction is to show that $\{g_j\}$ is locally uniformly near the origin.
For this, choose $\lambda_0>0$ so big that 
$\Delta^{\lambda_0} (K) \subset \widetilde U'.$
Then by \emph{Step 1} we obtain 
$$(\Delta^{\lambda_0}\circ g_j)(\B^k_{\tau_0}) \subset \widetilde U,  \forall j.$$
Hence for every $j$ we have $g_j(\B^k_{\tau_0}) \subset \left(\Delta^{\lambda_0}\right)^{-1}(\widetilde U),$ a bounded open subset of 
$\mathbb C^{n+1}$.
Now, by Montel's theorem, after passing to a subsequence we may assume that $g_j$ converges uniformly on compact sets of 
$\B^k_{\tau_0}$ to a holomorphic map $g:\B^k_{\tau_0} \to \mathbb C^{n+1}$. It follows that 
$$\lim\limits_{j \to \infty} g_j (0)=g(0)=\lim\limits_{j \to \infty} g_j(\xi_j).$$ 
We obtain a contradiction to (\ref{conv}). Hence we get a constant $\tau>0$ that satisfies both conditions in \emph{Step 1} and \emph{Step 2}.
\end{proof} 	
\n
The main step in the proof of Theorem 1 is included in the following result. We also use this proposition crucially in the 
next section.
\begin{proposition}\label{pro-scaling} Let $\omega$ be a domain in $\mathbb C^k$, $a\in \omega$ and $\sigma_j: \omega \to \Omega_j$ be a sequence of holomorphic mappings such that $\{\sigma_j(a)\}\Subset M_P$. Then $\{\sigma_j\}$ contains a subsequence that converges locally uniformly to a holomorphic map $\sigma: \omega \to M_P$. 
\end{proposition}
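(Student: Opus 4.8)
The plan is to reduce the statement to a single assertion of \emph{local uniform boundedness with values in compact subsets of $M_P$}: for every compact $Q\subset\omega$ there should exist a compact $L_Q\subset M_P$ and an index $j_Q$ with $\sigma_j(Q)\subset L_Q$ for all $j\ge j_Q$. Granting this, a diagonal argument over an exhaustion of $\omega$ by compacta, combined with Montel's theorem, produces a subsequence of $\{\sigma_j\}$ converging locally uniformly to a holomorphic map $\sigma\colon\omega\to\mathbb C^{n+1}$; since each $L_Q$ is a compact subset of $M_P$, the limit automatically satisfies $\sigma(\omega)\subset M_P$. The hypothesis $\{\sigma_j(a)\}\Subset M_P$ is exactly what seeds this boundedness and simultaneously prevents the limit from being degenerate, as $\sigma(a)=\lim_j\sigma_j(a)$ lies in $M_P$.

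To obtain the boundedness I would argue by a continuity (open--closed) method along $\omega$, with Lemma~\ref{Ga99} as the engine. First, at the base point: choosing $r>0$ with $\B^k(a,r)\Subset\omega$ and applying Lemma~\ref{Ga99} to the rescaled maps $\zeta\mapsto\sigma_j(a+r\zeta)$, which send $\B^k$ into $\Omega_j$ and carry $0$ into the fixed compact $\overline{\{\sigma_j(a)\}}\Subset M_P$, I obtain a compact $L\subset M_P$, an index $j_0$ and $\tau\in(0,1)$ with $\sigma_j(\B^k(a,r\tau))\subset L$ for all $j\ge j_0$. Hence the set $U$ of points of $\omega$ admitting a neighborhood on which $\{\sigma_j\}_{j\ge j_0}$ lands in a single compact subset of $M_P$ is open and contains $a$.

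The heart of the matter is to show that $U$ is closed in $\omega$; connectedness of $\omega$ then forces $U=\omega$, and local uniform boundedness on arbitrary compacta follows by a finite subcovering. Given $p_0\in\overline{U}\cap\omega$, I would fix a ball $\B^k(p_0,R)\Subset\omega$, pick a good point $p_1\in U$ inside it, and apply Lemma~\ref{Ga99} once more to the maps $\zeta\mapsto\sigma_j(p_1+\rho\zeta)$, with $\rho$ comparable to $\mathrm{dist}(p_1,\partial\omega)$, whose values at $0$ lie in a compact subset of $M_P$. The lemma then confines $\sigma_j$, for large $j$, to a compact $L'\subset M_P$ on a ball about $p_1$ of definite radius $\rho\tau'$; provided this radius exceeds $|p_0-p_1|$, a full neighborhood of $p_0$ lies in $U$, as desired.

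The hard part will be precisely this last uniformity: the attraction radius $\rho\tau'$ supplied by Lemma~\ref{Ga99} depends, through $\tau'$, on the compact $\overline{\{\sigma_j(p_1)\}}$, which could a priori degenerate toward $\partial M_P$ as $p_1\to p_0$, defeating the inequality $\rho\tau'>|p_0-p_1|$. This is exactly where the special structure of the model is indispensable: the $\Lambda$-homogeneity of $P$ makes the anisotropic dilations $\Delta^{\lambda}$ automorphisms of $M_P$, and there is a plurisubharmonic peak function at $0\in\partial M_P$. Both facts are already built into the proof of Lemma~\ref{Ga99} --- its Step~1 yields a \emph{fixed} attraction constant tied to the peak point, while its Step~2 renormalizes an arbitrary compact toward $0$ by a dilation $\Delta^{\lambda_0}$ --- so the relevant compacta can be pushed toward the peak point with the attraction constant staying bounded below, and the closedness argument goes through. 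Once $U=\omega$, the local uniform boundedness holds on every compact subset of $\omega$, and Montel's theorem together with a diagonal argument furnishes the desired subsequence; its limit takes values in the compacta $L_Q\subset M_P$, whence $\sigma\colon\omega\to M_P$ is the required holomorphic map.
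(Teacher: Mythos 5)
Your proposal is correct and takes essentially the same route as the paper: seed the boundedness at $a$ via Lemma~\ref{Ga99}, propagate it by an open--closed argument over $\omega$ (the paper's set $\omega'$ of points with a neighborhood mapped into a fixed compact of $M_P$ for large $j$), and conclude with Montel's theorem, the limit landing in $M_P$ because its values stay in compacta. The uniformity of the attraction constant $\tau$ that you single out as the hard point is indeed the crux; the paper uses it implicitly, and, as you observe, it follows from the proof of Lemma~\ref{Ga99}, where the constant $\tau_0$ produced in Step~1 via the peak function and the dilations $\Delta^{\lambda}$ is independent of the compact $K$.
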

\begin{proof}
Choose $r>0$ so small such that $\B^k (a,r) \Subset \omega.$ Set 
$$g_{a,j}(z):= \sigma_j \Big (r (z+\fr{a}r) \Big ) \ j \ge 1.$$
Then $g_{a,j}: \B^k \to \Omega_j$ and satisfies $g_{a,j} (0)=\sigma_j (a)$ is contained in a fixed compact subset $K$ of $M_P$.
It follows, in view of Lemma 3, that $\sigma_j (\B^k (a, \tau r))$ is included in some compact subset $L$ of $M_P$ for $j$ large enough.
Now we let $\omega'$ be the collection of $x \in \omega$ such that there exists a \nhd\ $U$ of $x$ such that $\sigma_j (U)$ 
is contained in a compact subset of $M_P$ for all $j$ {\it large enough}.
Then $\omega'$ is an open subset of $\omega$ and $a \in \omega'.$ 
We claim that $\omega'=\omega.$ If this is not so, then we can find a point $x_0 \in \omega \cap \partial \omega'.$
Choose $x_1 \in \omega'$ closed to $x_0$ and $r'>0$ so small that: 
$$x_0 \in \B^k (x_1, \tau r')\subset \B^k (x_1,r') \Subset \omega.$$
By considering the new sequence 
$$\sigma'_j (z) = \sigma_j \Big (r' (z+\frac{x_1}{r'})\Big ), \ z \in \B^k.$$
We may apply Lemma 3 again to infer that $\sigma_j (\B^k (x_1, \tau r'))$ is contained in some compact set of $M_P$ for $j$ large enough. This implies that $x_0 \in \omega'.$ We reach a contradiction. Thus $\omega'=\omega$ as claimed. 

Finally, in view of Montel's theorem, after passing to a subsequence, we may assume that $\sigma_j$ uniformly converges on compact sets of $\omega$ to a holomorphic map $\sigma: \omega \to \mathbb C^n.$ By the above reasoning we see that $\sigma (\omega) \subset M_P.$ The desired conclusion follows. 
\end{proof}

We are now ready to give a proof of Theorem \ref{maintheorem1}.
\begin{proof}[Proof of Theorem \ref{maintheorem1}]
Assume that $(\Omega,\xi_0)$ is $h$-extendible. It means that the model $M_P$ is also $h$-extendible.
By the hypothesis, the sequence $\{\eta_j:=\varphi_j(a)\}$ converges $\Lambda$-nontangentially to  $\xi_0=(0',0)$. Then one can find a sequence $\{\epsilon_j\}\subset \mathbb R^+$ converging to $0^+$ such that the sequence of points $\eta_j'=\eta_j+(0',\epsilon_j)$ is in the hypersurface $\{\rho=0\}$ for every $j\ge 1$.
Let us define $T_j:=\Delta^{\epsilon_j}\circ L_{\eta_j'}$ and $\sigma_j:=T_j\circ \varphi_j\colon \varphi_j^{-1}(U_0^-)\to \Omega_j.$
Then one sees that $T_j(\eta_j)=(0',-1)$ 
and $\{\sigma_j\}$ is a sequence of biholomorphic mappings satisfying 
$$
\sigma_j(a)=b:=(0',-1),\; j\geq 1.
$$
Thus, by Proposition \ref{pro-scaling}, after passing to a subsequence, we may assume that $\sigma_j$ converges locally uniformly to a holomorphic map $\sigma: \Omega \to M_P$ which satisfies $\sigma (a)=b$. 

On the other hand, since $\Omega$ is taut, the sequence $\sigma_j^{-1}\colon \Omega_j\to \varphi_j^{-1}(U_0^-)\subset \Omega$ is also normal. Since $\sigma_j^{-1} (b)=a \in \Omega$, we may also assume, after switching a subsequence that
$\sigma_j^{-1}$ converges locally uniformly to a holomorphic map $\sigma^*: M_P \to \Omega$. It then follows from Proposition \ref{T:7} that $\sigma^*$ is the inverse of $\sigma$ and so $\sigma$ maps
$\Omega$ biholomorphically onto $M_P$. It is then obvious that $\sigma (a)=\lim\limits_{j \to \infty} \sigma_j (a)=(0',-1).$ Thus, we have shown the assertion (a). 

For (b), we claim that there exists a sequence $\xi_j \to \xi_0$ such that
$$\varliminf\limits_{x \to \xi_j} \vert \sigma (x) \vert <\infty \ \forall j.$$
If the claim fails then we may find an open ball $B$ around $\xi_0$ such that
$$\lim\limits_{x \to \xi} \vert \sigma (x) \vert =\infty \ \forall \xi \in B \cap \partial \Omega.$$
Then we choose a {\it bounded} holomorphic function $f$ on $M_P$ such that $f \not \equiv 0$ and
$$\lim\limits_{\vert z\vert \to \infty, z \in M_P} f(z)=0.$$ 
Indeed, it suffices to take $N=1$ in the proof of Theorem $3.4$ in [Yu94] to obtain the desired function $f$.
It follows that $\hat f:=f \circ \sigma$ is bounded holomorphic on $\Omega$ and satisfies 
$$\lim\limits_{x \to \xi} \hat f (x)=0 \ \forall \xi \in B \cap \partial \Omega.$$
Suppose that $\hat f \not\equiv 0$ on $\Omega$. Then $S:=\{x \in \Omega: \hat f(x)=0\}$ is a complex hypersurface of $\Omega.$
Thus we can find a point $x_0 \in \Omega \setminus S$ that is so close to $\partial \Omega$ such that for 
some $\xi^0 \in B \cap \partial \Omega$ the open segment connecting $\xi^0$ and $x_0$ stays in $\Omega.$ 
Let $l$ be the complex line joining $x_0$ and $\xi^0$ and $\Omega_l$ be the connected component of 
$l \cap \Omega$ that contains $x_0.$ Then $\hat f|_l$ is a bounded holomorphic function on $\Omega_l$ that tends to $0$
at an open piece of $\partial \Omega_l.$
By applying the two constant theorem to the bounded subharmonic function $\log \vert \hat f|_l\vert$ we infer that 
$\log \vert \hat f|_l \vert$ must be identically $-\infty$ on $\Omega_l.$ In particular $\hat f (x_0)=0$, which is absurd.
Hence $\hat f \equiv 0$ on $\Omega$, which is impossible since $\sigma$ is biholomorphic. 
Thus our claim is valid. 

On the other hand, since $\Omega$ is of finite type at $\xi_0,$ we may achieve that $\Omega$ is of finite type at every point $\xi_j.$ Furthermore, one can also find sequences $\Omega \ni \{x_{k,j}\} \to \xi_j$ such that
$\sigma (x_{k,j}) \to \tilde \xi_j \in \partial M_P$ as $k \to \infty.$
Now we can apply Proposition 3 in [Ber95] to reach the conclusion (b). The proof is thereby complete.
\end{proof}
\section{ Proof of Theorem \ref{theorem 1.2}}
 
Throughout this section, let $\Omega$ be a domain  and $\xi_0\in \partial \Omega $ be as in the hypothesis of Theorem \ref{theorem 1.2}. Let $\rho$ be a local smooth defining function for  $\Omega $ near $\xi_0$. After a  change of coordinates, we can find the coordinate functions $(z_1,\ldots, z_n,w)$ defined on a neighborhood $U_0$ of $\xi_0$ such that $\xi_0=0$ and
$\Omega$ can be described locally near $0$ as 
\begin{equation}\label{eq77}
\Omega=\left\{\rho(z,w)=\mathrm{Re}(w)+ P(z) +R_1(z) + R_2(\mathrm{Im} w)+(\mathrm{Im} w) R(z)<0\right\}.
\end{equation}
Here $P$ is a $\Lambda$-homogeneous plurisubharmonic real-valued polynomial containing no pluriharmonic monomials, $R_1\in \mathcal{O}(1, \Lambda),R\in \mathcal{O}(1/2, \Lambda) $, and $R_2\in \mathcal{O}(2)$. 
Let us fix a small neighborhood $U_0$ of $0$ and consider any point $\eta=(\alpha,\beta)\in U_0$.
Now we define an anisotropic dilation $\Delta^\epsilon$ and a translation $L_\eta$, respectively, by
$$
\Delta^\epsilon(z_1,\ldots,z_n,w)=\left(\frac{z_1}{\epsilon^{1/m_1}},\ldots,\frac{z_n}{\epsilon^{1/m_n}},\frac{w}{\epsilon}\right)
$$
and
$$
L_\eta(z,w)=(z,w)-\eta=(z-\alpha,w-\beta).
$$
Let $\{\eta_j\}$ be a sequence in $\Omega$ converging $\Lambda$-nontangentially to $\xi_0=0$. Without loss of generality, we may assume that $\eta_j=(\alpha_j,\beta_j)\in U_0^-:=U_0\cap\{\rho<0\}$ for all $j$. For this sequence $\{\eta_j\}$, one associates with a sequence of points $\eta_j'=(\alpha_{1j}, \ldots, \alpha_{n_j},\beta_{j}+\epsilon_j)$, $ \epsilon_j>0$, $\eta_j'$ in the hypersurface $\{\rho=0\}$. Let us consider the sequences of dilations $\Delta^{\epsilon_j}$ and translations $L_{\eta_{j}'}$. Then $\Delta^{\epsilon_j}\circ L_{\eta_{j}'}({\eta}_j)=(0,\ldots,0,-1)$ and moreover, by Lemma 1, after taking a subsequence, one can deduce that $\Delta^{\epsilon_j}\circ L_{\eta_{j}'}(U_0^-)$ converges to the following model
\begin{equation*}
M_P:=\left\{\hat\rho:=\mathrm{Re}(w)+ P(z)<0\right\},
\end{equation*}
where $P(z)$ is the real $\Lambda$-homogeneous polynomial given in (\ref{eq77}).

Now we are ready to give a proof of Theorem \ref{theorem 1.2}. To do this, let us set $\delta _j=2(1-s_{\Omega}(\eta_j))$ for all $j$. Then by our assumption, for each $j$ there exists an injective holomorphic map $f_j:\Omega\to \mathbb{B}^{n+1} $ such that $f_j(\eta_j)=(0,\ldots,0)$ and $\mathbb{B}^{n+1}(0;1-\delta _j)\subset f_j(\Omega)$. 
By Proposition \ref{T:8}, one sees that $f_j(\Omega \cap U_0)$ converges to $\mathbb B^{n+1}$. So, Proposition \ref{pro-scaling} shows that the sequence $T_j\circ f_j^{-1} \colon f_j(\Omega \cap U_0) \to  T_j(\Omega \cap U_0) $ is normal and its limits are holomorphic mappings from $\mathbb B^{n+1}$ to $M_P$, where $T_j:=\Delta^{\epsilon_j}\circ L_{\eta_j'}$ for every $j\in \mathbb N^*$. Moreover, by Montel's theorem the sequence $ f_j\circ T_j^{-1} \colon T_j(\Omega \cap U_0)\to   f_j(\Omega \cap U_0) \subset \mathbb B^{n+1}$ is also normal. We note that since $T_j\circ f_j^{-1}(0)=(0',-1)\in M_P$, it follows that the sequence $T_j\circ f_j^{-1}$ is not compactly divergent. Therefore, by Proposition \ref{T:7}, after taking some subsequence we may assume that $T_j\circ f_j^{-1}$ converges uniformly on every compact subset of $\mathbb B^{n+1}$ to a biholomorphism from $\mathbb B^{n+1}$ onto $M_P$. 
 
Observe that the unit ball $\mathbb{B}^{n+1}$ is biholomorphic to the Siegel half-space 
$$\mathcal{U}:=\{(z,w)\in \mathbb{C}^n \colon\mathrm{Re}(w) +|z_1|^2+|z_2|^2+\cdots+|z_n|^2<0\}.$$ 
Hence, we may assume that there exists a biholomorphism $\psi\colon M_P\to \mathcal{U}$.

As in the end of the proof of Theorem 1, we can find 
a bounded holomorphic function $\phi$ on $\mathcal{U}$ which is continuous on $\overline{\mathcal{U}}, \phi \not\equiv 0$ 
and tends to $0$ at infinity. (Actually in this concrete situation we may write down explicitly such a function $\phi$.) We claim that there exists $t_0\in \mathbb R$ such that $\varliminf\limits_{\substack{x\to 0\\
 x<0}}|\psi (0',x+it_0)|<+\infty$. Indeed, if this would not be the case, the function $\phi\circ\psi$ would equal to $0$ on the half-plane $\{\mathrm{Re}(w)<0, z=0\}$ and this is impossible since $\phi \not\equiv 0$. Therefore, we may assume that there exists a sequence $x_k<0$ such that $\lim x_k=0$ and $\lim\psi (0',x_k+it_0)=p_0\in \partial \mathcal U$. Hence, it is proved in \cite[Theorem $2.1$]{CP01} that under these circumstances $\psi$ extends holomorphically to a neighborhood of $(0',it_0)$. Since the Levi form is preserved under local biholomorphisms around a boundary point, it follows that $M_P$ is strongly pseudoconvex at $(0',it_0)\in \partial M_P$. This yields that $m_1=\cdots=m_n=2$ and $P(z)=|z_1|^2+\cdots +|z_n|^2$, and thus $\Omega$ is strongly pseudoconvex at $\xi_0$, as desired.  \hfill $\Box$

\begin{Acknowledgement}  Part of this work was done while the authors were visiting the Vietnam Institute for Advanced Study in Mathematics (VIASM) in 2019. We would like to thank the VIASM for financial support and hospitality. The first author was supported by the Vietnam National Foundation for Science and Technology Development (NAFOSTED) under Grant Number 101.02-2017.311. The second author was supported by the Vietnam National Foundation for Science and Technology Development (NAFOSTED) under Grant Number 101.02-2019.304. It is a pleasure to thank Hyeseon Kim for stimulating discussions. Especially, we would like to express our gratitude to the referee. His/her valuable comments on the first version of this paper led to significant improvements.
\end{Acknowledgement}
\vskip1cm

\end{document}